\documentclass[a4paper,12pt]{article}

\usepackage{amsmath}
\usepackage{amsthm}
\usepackage{amssymb}
\usepackage{cite}
\usepackage{url}
\usepackage{footmisc}
\usepackage{graphicx}
\usepackage{epstopdf}
\usepackage{chngcntr}
\usepackage{enumitem}
\usepackage{hhline}
\usepackage{array}
\usepackage{hyperref}
\usepackage{color}
\usepackage{multirow}
\usepackage{tabularx}
\usepackage{tikz}
\usetikzlibrary{patterns}
\usetikzlibrary{decorations.pathreplacing}
\usepackage[utf8]{inputenc}

\hypersetup{colorlinks=false}

\makeatletter
\let\@fnsymbol\@arabic
\makeatother

\tikzset{every picture/.style={line width=0.75pt}} 
\usetikzlibrary{patterns}

\tikzset{
    pattern size/.store in=\mcSize, 
    pattern size = 5pt,
    pattern thickness/.store in=\mcThickness, 
    pattern thickness = 0.3pt,
    pattern radius/.store in=\mcRadius, 
    pattern radius = 1pt
}
\makeatletter
\pgfutil@ifundefined{pgf@pattern@name@_7gn8i7dj0}{
    \pgfdeclarepatternformonly[\mcThickness,\mcSize]{_7gn8i7dj0}
    {\pgfqpoint{0pt}{0pt}}
    {\pgfpoint{\mcSize+\mcThickness}{\mcSize+\mcThickness}}
    {\pgfpoint{\mcSize}{\mcSize}}
    {
    \pgfsetcolor{\tikz@pattern@color}
    \pgfsetlinewidth{\mcThickness}
    \pgfpathmoveto{\pgfqpoint{0pt}{0pt}}
    \pgfpathlineto{\pgfpoint{\mcSize+\mcThickness}{\mcSize+\mcThickness}}
    \pgfusepath{stroke}
    }
}
\makeatother

\if@twoside
\else 
\fi

\numberwithin{equation}{section}
\counterwithin{figure}{section}
\counterwithin{table}{section}

\theoremstyle{plain}
\newtheorem{theorem}{Theorem}[section]
\newtheorem{lemma}[theorem]{Lemma}

\theoremstyle{definition}

\theoremstyle{remark}

\newcommand{\norm}[1]{\left\|#1\right\|}
\newcommand{\abs}[1]{\left\vert#1\right\vert}
\newcommand{\spr}[1]{\left\langle\,#1\,\right\rangle}
\newcommand{\kl}[1]{\left(#1\right)}
\newcommand{\Kl}[1]{\left\{#1\right\}}

\newcommand{\R}{\mathbb{R}} 

\def\div{\mfunc{div}} 

\newcommand{\eps}{\varepsilon}

\newcommand{\uf}{\boldsymbol{u}}
\newcommand{\vf}{\boldsymbol{v}}
\newcommand{\wf}{\boldsymbol{w}}
\newcommand{\zf}{\boldsymbol{z}}
\newcommand{\hf}{\boldsymbol{h}}
\newcommand{\ff}{\boldsymbol{f}}
\newcommand{\yf}{\boldsymbol{y}}
\newcommand{\laf}{\boldsymbol{\lambda}}

\renewcommand{\div}{{\rm div}}

\let\div\undefined
\DeclareMathOperator{\div}{div}
\DeclareMathOperator*{\argmin}{arg\,min}
\newcommand{\Kon}{K_c}

\newcommand{\inter}{\text{\rm int}}

\newcommand{\sss}{\scriptsize}

\title{On a PDE-based material parameter identification problem with contact constraints}
\author{
Simon Hubmer\footnote{Johannes Kepler University Linz, Institute of Industrial Mathematics, Altenbergerstra{\ss}e 69, A-4040 Linz, Austria, (simon.hubmer@jku.at)} ,
Stefan Kindermann\footnote{Johannes Kepler University Linz, Institute of Industrial Mathematics, Altenbergerstra{\ss}e 69, A-4040 Linz, Austria, (kindermann@indmath.uni-linz.ac.at)} ,
Ekaterina Sherina\footnote{University of Vienna, Faculty of Mathematics, Oskar Morgenstern-Platz 1, 1090 Vienna, Austria (ekaterina.sherina@univie.ac.at), \textbf{corresponding author}}\,\,\textsuperscript{,}\footnote{Christian Doppler Laboratory for Mathematical Modeling and Simulation of Next Generations of Ultrasound Devices (MaMSi), Oskar Morgenstern-Platz 1, 1090 Vienna, Austria}
}
\date{\today}

\begin{document}

\maketitle

\begin{abstract}

We consider the identification of a scalar coefficient in a PDE-based parameter estimation problem with contact constraints. The considered problem can be used as an idealized model of a membrane under forces, constrained by a barrier or indenter. More generally, it serves as a benchmark for the analysis of more complex contact problems and the development of corresponding reconstruction algorithms. In this paper, we discuss both the forward and inverse parameter estimation problems, as well as uniqueness and non-uniqueness issues caused by the contact constraints. Furthermore, we consider the design and implementation of reconstruction approaches which we test on numerical examples, illustrating both uniqueness and non-uniqueness as well as parameter identifyability.

\medskip
\noindent \textbf{Keywords.} Inverse and Ill-Posed Problems, Material Parameter Estimation, Contact Constraints, Membrane Coefficient Estimation, Iterative Regularization

\end{abstract}


\section{Introduction}\label{sect_introduction}

In this paper, we consider the estimation of the scalar coefficient $a$ in the constrained~PDE
    \begin{equation}\label{problem}
    \begin{split}
        - \div\kl{a(x)\nabla u(x) } &= f(x) \,, \qquad \forall \, x \in \Omega \cap \Kl{ u>h} \,, 
        \\
        u(x) &\geq h(x) \,, \qquad \quad \,\,\, \text{a.e.\ in } \Omega \,, 
    \end{split}    
    \end{equation}
augmented with boundary conditions (e.g., $u = 0$ on $\partial \Omega$), where $\Omega$ is a bounded Lipschitz domain, the functions $f$ and $h$ are known, and direct or indirect measurements of $u$ are available up to a certain amount of measurement noise. We furthermore assume in the following that $0 <c_1<a(x) <c_2$, a.e.\ in $\Omega$ with positive constants $c_1,c_2 \in \R$.

Without the constraint $u \geq h$, this is a standard parameter estimation problem for PDEs and has been studied extensively in the past, both theoretically and numerically, for example, within the context of the inverse conductivity problem \cite{CalderonAlbertoP2006Oaib, Isakov_2006}, electrical impedance tomography \cite{CheneyMargaret1999EIT,Mueller_Siltanen_2012,HolderDavid2022Eit}, or acousto-electrical tomography \cite{Zhang_Wang_2004,Capdeboscq_Fehrenbach_Gournay_Kavian_2009,Ammari_Bonnetier_Capdeboscq_Tanter_Fink_2008, Bal_Naetar_Scherzer_Scotland_2013, Hubmer_Knudsen_Li_Sherina_2018}. 

However, less is known about the parameter identification in the setting \eqref{problem} with constraints. The model problem \eqref{problem} can be seen as an idealized model of a membrane under forces, where the (contact) constraint $u \geq h$ models a barrier or an indenter with the shape $h$. As such, it serves as a simplified model of an indentation test in elasticity/elastography. Here, the coefficient $a$ might represent an elasticity or geometric (thickness) parameter of the membrane, and the goal is to identify it from (partial knowledge) of the deformation. Furthermore, \eqref{problem} can also be seen as a general benchmark problem for the analysis of material parameter estimation problems with more complex contact conditions, such as the Signorini conditions \cite{KikuchiOden}, and for the development of corresponding reconstruction approaches.

In contrast to the problem without constraints, we found that the coefficient-inverse problem for \eqref{problem} (or related constraint problems) was comparatively scarcely studied in the literature:
Parameters (both elastic ones and others) were identified mostly by using variants of an output-least squares fitting, e.g., in \cite{Sahin,Bensaada,MaLiu,SunLin}; the parameter identification for a 1D beam model was investigated in \cite{Radova}. Furthermore, the identification of elastic parameters from indentation curves has been studied in \cite{CoTa,TaCo} (see also the informative review \cite{BoCo}); finally, a parametric identification problem for indentation problems was considered in~\cite{Fernandez}. On a more abstract level, related to our model problem, there is considerable literature for inverse problems for variational inequalities, e.g., the comprehensive work of \cite{Gwinner2018,Gwinnerbook,Gwinner2,Gwinner1,KhanMot,KhanRac}. These results are broad extensions of the abstract energy-functional (variational) approach for elliptic parameter identification problems (see, e.g., \cite{Gockenbach_Jadamba_Khan_2008,Gock,GockII}) to the contact case. Note that our work has a slightly different focus than these works in that we from the outset consider the case of partial observations and also investigate iterative (and not necessarily variational) methods for solving the inverse problem.

Hence, in this paper we study both the forward and inverse problems corresponding to the constrained PDE model \eqref{problem} and discuss questions of uniqueness and non-uniqueness in relation to the contact condition $u \geq h$, as well as some aspects of the corresponding regularization and reconstruction algorithms. In contrast to the classic inverse conductivity problem, non-uniqueness can be seen to occur due to the presence and shape of the indenter, which can also be understood intuitively within the viewpoint of a constrained membrane under forces. Furthermore, we consider the stable recovery of the parameter $a$ using (iterative) regularization methods in both finite and infinite dimensional settings. These are adapted to the non-differential nature of the underlying problem, requiring suitable approximation techniques. Finally, we conduct a number of numerical experiments on simulated data to illustrate the challenges of the problem, and to demonstrate the performance of the proposed reconstruction algorithms.

The outline of this paper is as follows: In Section~\ref{sect_forward} and Section~\ref{sect_inverse} we study the forward and inverse contact problem, respectively, and consider uniqueness and non-uniqueness caused by the contact constraints. In Section~\ref{sect_regularization}, we consider regularization approaches for the inverse contact problems, and in Section~\ref{sect_discrete} discuss the discretization of both the forward and inverse problems. Finally, in  Section~\ref{sect_numerics}, we provide a number of illustrative numerical examples, and finish with a brief conclusion in Section~\ref{sect_conclusion}.

\section{The forward contact problem}\label{sect_forward}

\begin{figure}[ht!]
    \begin{center}
        \includegraphics[trim = {40 35 40 12}, clip=true, width = 0.49\textwidth]{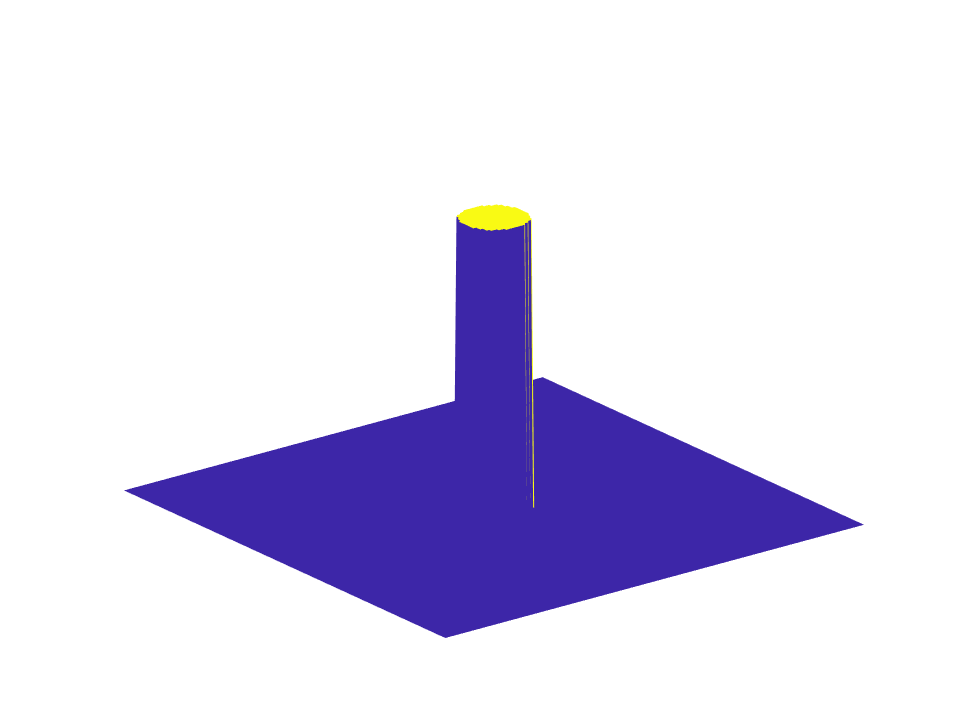}
        \includegraphics[trim = {40 35 40 12}, clip=true, width = 0.49\textwidth]{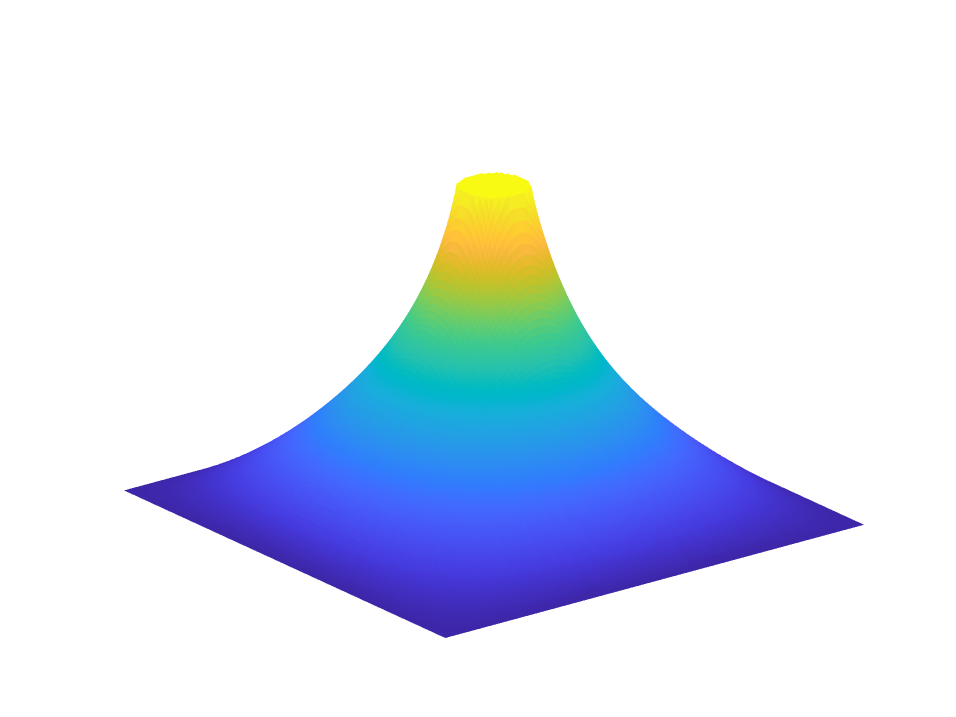}
    \end{center}
    \caption{Illustrative example of a constraint function ($=$ indenter shape) $h(x)$ (left) and corresponding solution of the forward contact problem \eqref{eq:main} (right).}
    \label{fig:zero}
\end{figure} 

\noindent
In this section, we consider the mathematically rigorous formulation of our model problem \eqref{problem} for $u$, and discuss a number of equivalent variants used below. 

For this, we start by reformulating the problem as a constraint energy minimization problem in appropriate Hilbert spaces for given $a$ (and $f,h$ and, for simplicity, using homogeneous Dirichlet conditions on the boundary): The solution $u$ is defined by 
    \begin{equation}\label{eq:main}
    \begin{split}
        u &:= \argmin_{v \in H_0^1(\Omega)} \frac{1}{2} \int_\Omega a(x) \abs{\nabla v(x)}^2 \, dx - \int_\Omega f(x) v(x) \, dx \,,
        \\
        & \qquad \quad \text{ s.t.\ }\  v(x) \geq h(x) \qquad \text {a.e.\ in   } \Omega \,.
    \end{split}
    \end{equation} 
Figure~\ref{fig:zero} illustrates an instance of $h$ and the corresponding solution $u$. Intuitively, it is clear that the optimality conditions for the minimization problem in \eqref{eq:main} lead to the equations in \eqref{problem}, and for the remainder of the paper, we thus use \eqref{eq:main} as the (rigorous) definition of our model problem. Note that an alternative treatment of, e.g., Neumann boundary conditions could be pursued along similar lines.

First, we show that \eqref{eq:main} has a unique solution under standard assumptions.  For this, we start by expressing the constraint $v \geq h$ as $v \in K_h$, where 
    \begin{equation}\label{def:kon}
        K_h := \Kl{ v \in H_0^1(\Omega) \,\vert \, v(x) \geq h(x)  \text { a.e.\ in } \Omega } \,.
    \end{equation} 
With this definition, we obtain the following characterization:    

\begin{lemma}\label{lemma_Kh}
For $h \in H_0^1(\Omega)$ the set $K_h$ defined in \eqref{def:kon} is closed. Furthermore, 
    \begin{equation*} 
        K_h  = h + K_0 \,,
    \end{equation*}
and $K_0$ is a closed convex nonempty cone in $H_0^1(\Omega)$. 
\end{lemma}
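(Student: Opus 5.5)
The plan is to first establish the structural claims about $K_0$ and then transfer them to $K_h$ through the translation identity. The set $K_0 = \Kl{v \in H_0^1(\Omega) \,\vert\, v \geq 0 \text{ a.e.\ in } \Omega}$ is nonempty because $0 \in K_0$. It is a convex cone because for $v,w \in K_0$ and $\lambda,\mu \geq 0$ we have $\lambda v + \mu w \geq 0$ a.e.; this is immediate since the a.e.\ inequalities hold off a union of two null sets.

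For closedness of $K_0$, take a sequence $v_n \in K_0$ with $v_n \to v$ in $H_0^1(\Omega)$. Since the $H^1$-norm dominates the $L^2$-norm, $v_n \to v$ in $L^2(\Omega)$. Passing to a subsequence, $v_{n_k} \to v$ pointwise a.e.\ in $\Omega$. Each $v_{n_k}$ is nonnegative outside a null set $N_k$, so outside the null set $\bigcup_k N_k \cup N$, where $N$ is the exceptional set of the pointwise convergence, the limit satisfies $v \geq 0$. Hence $v \in K_0$. An alternative route, which I would mention as a check, is to write $K_0 = \bigcap_{\varphi} \Kl{v : \int_\Omega v \varphi \, dx \geq 0}$ over nonnegative $\varphi \in L^2(\Omega)$; this is an intersection of closed half-spaces, since each functional is continuous on $H_0^1(\Omega)$.

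For the identity $K_h = h + K_0$, the assumption $h \in H_0^1(\Omega)$ is used. If $v \in K_h$, then $v - h \in H_0^1(\Omega)$ because it is a linear space, and $v - h \geq 0$ a.e., so $v \in h + K_0$. Conversely, if $v = h + w$ with $w \in K_0$, then $v \in H_0^1(\Omega)$ and $v \geq h$ a.e. Closedness of $K_h$ then follows because translation by $h$ is a homeomorphism of $H_0^1(\Omega)$, so it maps the closed set $K_0$ onto a closed set.

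The only step with real content is closedness, namely passing from $H^1$ convergence to a.e.\ nonnegativity. The subsequence argument handles this, and no real obstacle is expected.
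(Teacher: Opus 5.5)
Your proof is correct and follows the paper's argument: closedness of $K_0$ comes from extracting an a.e.\ convergent subsequence of an $H_0^1(\Omega)$-convergent sequence, and the convex cone property is immediate. You additionally check the identity $K_h = h + K_0$ and the transfer of closedness to $K_h$ by translation, both of which the paper leaves implicit.
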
 
\begin{proof}
The set $K_h$ is nonempty, since $h$ is contained in it. Furthermore, it is clear that $K_0$ is a convex cone. Since any convergent sequence in $H_0^1(\Omega)$ has an a.e.\ pointwise convergence subsequence, it also follows that $K_0$ and thus $K_h$ is closed. 
\end{proof} 

Next, we state the ingredients for a variational form of \eqref{eq:main}: For a fixed $a \in L_\infty(\Omega)$ with $a \geq c_0 >0 $, let the bilinear form $b$ and the linear form $l$ be defined by 
    \begin{equation}\label{def_b_l} 
        b_a(u,v):=  \int_\Omega a(x) \nabla u(x)\cdot\nabla v(x) \, dx \,, \qquad \text{and} \qquad l(v) := \int_\Omega f(x) v(x) \, dx \,. 
    \end{equation}
With this, we can write \eqref{eq:main} as the convex unconstrained optimization problem  
    \begin{equation}\label{funct}
        u = \argmin_{v \in H_0^1(\Omega)} J_a(v) \,, 
        \qquad \text{with} \qquad 
        J_a(v) := \tfrac{1}{2} b_a(v,v) - l(v) + i_{K_h}(v) \,,
    \end{equation}
where $i_{K_h}$ denotes the indicator function 
    \begin{equation*}
        i_{K_h}(v) = 
        \begin{cases} 
            0 \,, & v \in K_h \,,
            \\ 
            \infty \,, & \text{else} \,.
        \end{cases}
    \end{equation*} 
Note that $i_{K_h}(v)$ is a convex functional, and its subgradient is the normal cone, i.e.,
    \begin{equation*} 
        \partial \, i_{K_h}(v) = N_{K_h}(v) := \Kl{ v^* \in H^{-1}(\Omega) \, \vert \, \spr{v^*,v- u} \leq 0 \,, \quad \forall \, v \in K_h } ,
    \end{equation*}
see, e.g., \cite[Def.~9.5.2]{att}. For later reference, and given a solution $u$ of \eqref{eq:main} or \eqref{funct}, we also define the contact area $\Kon$, i.e., the set where the constraints are active, as
    \begin{equation}\label{Kon}
        \Kon := \Kl{ x \in \Omega \, \vert \, u(x) = h(x)} \,,
    \end{equation} 
which has to be understood modulo nullsets, analogous to the definition of the essential support. Concerning the solvability of \eqref{eq:main}, we have the following standard results:

\begin{theorem}\label{th:exist}
Let $f \in H^{-1}(\Omega)$, $h \in H_0^1(\Omega)$, and $a \in L_\infty(\Omega)$ with $a \geq c_1$. Then, there exists a unique solution $u\in H_0^1(\Omega)$ of \eqref{eq:main}, which is uniquely characterized by the variational inequality 
    \begin{equation*}
        b_a(u,u-v) - l(u-v) \geq  0 \,, \qquad \forall \, v \in H_0^1(\Omega) \cap K_h \,.
    \end{equation*}
\end{theorem}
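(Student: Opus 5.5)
Our plan is to treat \eqref{eq:main} as the minimization of a strictly convex, coercive, lower semicontinuous functional over the closed convex set $K_h$, and then to derive the variational inequality as the first-order optimality condition. Note that the inequality as stated should be read with the sign convention $b_a(u,v-u) - l(v-u) \geq 0$ for all $v \in K_h$, equivalently $b_a(u,u-v) - l(u-v) \leq 0$. This is the standard Stampacchia/Lions characterization, and we will prove it in that form.

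\textbf{Existence and uniqueness.} By Lemma~\ref{lemma_Kh}, $K_h = h + K_0$ is nonempty, closed and convex in $H_0^1(\Omega)$. Since $a \in L_\infty(\Omega)$ and $a \geq c_1 > 0$, the bilinear form $b_a$ from \eqref{def_b_l} is bounded and symmetric. By the Poincaré inequality on the bounded Lipschitz domain $\Omega$ it is also coercive:
\begin{equation*}
b_a(v,v) \geq c_1 \norm{\nabla v}_{L_2}^2 \geq c \norm{v}_{H^1}^2 .
\end{equation*}
The functional $l$ is a bounded linear functional on $H_0^1(\Omega)$ because $f \in H^{-1}(\Omega)$. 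Hence $J_a$ in \eqref{funct} has the following properties:
\begin{itemize}[label={}, leftmargin=0pt]
\item it is strictly convex, since the quadratic part is strictly convex and $i_{K_h}$ is convex;
\item it is weakly lower semicontinuous, since it is convex and continuous on $K_h$ and $K_h$ is convex and closed, hence weakly closed;
\item it is coercive, since $J_a(v) \geq \tfrac{c}{2}\norm{v}^2 - \norm{f}_{H^{-1}}\norm{v}$ on $K_h$.
\end{itemize}
Take a minimizing sequence. It is bounded, so it has a weakly convergent subsequence, and weak lower semicontinuity shows that the weak limit is a minimizer. Strict convexity gives uniqueness. Alternatively, one can simply invoke the Lions--Stampacchia theorem.

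\textbf{Characterization.} Suppose $u$ is the minimizer and let $v \in K_h$. By convexity $u + t(v-u) \in K_h$ for $t\in(0,1]$, so $J_a(u+t(v-u)) - J_a(u) \geq 0$. Expanding the quadratic and dividing by $t$ gives
\begin{equation*}
b_a(u,v-u) - l(v-u) + \tfrac{t}{2} b_a(v-u,v-u) \geq 0 .
\end{equation*}
Letting $t\to 0$ yields the variational inequality.

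Conversely, suppose $u\in K_h$ satisfies the inequality. Symmetry of $b_a$ gives the identity
\begin{equation*}
J_a(v) - J_a(u) = b_a(u,v-u) - l(v-u) + \tfrac12 b_a(v-u,v-u) .
\end{equation*}
Both terms on the right are nonnegative, so $u$ is a minimizer. Therefore the variational inequality characterizes the minimizer, which is unique. Uniqueness of solutions to the inequality can also be seen directly: add the inequalities for two solutions $u_1,u_2$, each tested with the other, to get $b_a(u_1-u_2,u_1-u_2)\leq 0$, and then use coercivity.

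\textbf{Main obstacle.} We expect no real difficulty. The only delicate points are the weak closedness of $K_h$, which follows from Mazur's lemma together with the strong closedness in Lemma~\ref{lemma_Kh}, and keeping the sign convention of the inequality straight.
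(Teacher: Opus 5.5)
Your proof is correct and follows essentially the paper's route. Existence and uniqueness come from strict convexity, Poincaré coercivity and closedness/convexity of $K_h$. The variational inequality is the first-order optimality condition, which the paper writes as $-(b_a(u,\cdot)-l)\in N_{K_h}(u)$ and you derive by hand via the first variation along feasible segments, with the exact quadratic expansion giving sufficiency. Your sign correction is also right: with the paper's definition of $N_{K_h}$, its own argument yields $b_a(u,v-u)-l(v-u)\geq 0$ for all $v\in K_h$, consistent with \eqref{this3}, so the $\geq 0$ in the printed statement is a typo.
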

\begin{proof}
The existence and uniqueness of $u\in H_0^1(\Omega)$ follow by considering the transformation $\tilde{u} = u - h$, and noting that the minimization problem for $\tilde{u}$ is a minimization over a closed convex set $K_0$ with a bounded and $H_0^1(\Omega)$-continuous, strictly coercive functional. The coercivity itself follows from standard Poincare-inequality estimates. The optimality condition of this minimization problem can be stated as 
    \begin{equation}\label{opc}
        b_a(u,\cdot) - l(\cdot) + \partial \,  i_{K_h}(u) = b_a(u,\cdot) - l(\cdot) + N_{K_h}(u)  \ni 0 \,.
    \end{equation}
Considering $v^*:= b_a(u,\cdot) - l(\cdot)$ as an element in $H^{-1}(\Omega)$, it follows that $-v^* \in N_{K_h}(u)$, which yields the variational inequality. Conversely, since the problem is convex, the variational inequality yields the first-order optimality condition, which is sufficient. 
\end{proof} 

The optimality conditions \eqref{opc} may  be written in the form of Lagrange multipliers:

\begin{theorem}
Under the assumptions of Theorem~\ref{th:exist}, the solution $u \in H_0^1(\Omega)$ of \eqref{eq:main} is characterized by the following condition: There exists a $\lambda \in H^{-1}(\Omega)$ such that 
    \begin{alignat}{2}
        b_a(u,v)  &= l(v) + \spr{\lambda ,v } \,, &\qquad &\forall\, v \in H_0^1(\Omega) \,, \label{this1}
        \\
        u &\geq h \,, & \qquad &\text{a.e.\ in } \Omega \,,\label{this2}
        \\
        \spr{\lambda ,v -u} & \geq 0 \,,& \quad &\forall \, v \in K_h \,.  \label{this3}
    \end{alignat}
\end{theorem}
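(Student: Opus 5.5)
The plan is to read off the multiplier directly from the optimality condition \eqref{opc} of Theorem~\ref{th:exist}, and then to check the converse by reducing back to the variational inequality.

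\textbf{Necessity.} Let $u$ be the solution of \eqref{eq:main}. Define $\lambda \in H^{-1}(\Omega)$ by
\[
\spr{\lambda,v} := b_a(u,v) - l(v), \qquad v \in H_0^1(\Omega).
\]
This is a bounded linear functional on $H_0^1(\Omega)$, because $a\in L_\infty(\Omega)$ and $f\in H^{-1}(\Omega)$. With this choice, \eqref{this1} holds by construction. Condition \eqref{this2} holds because $u \in K_h$. Since $\lambda = v^*$ in the notation of the proof of Theorem~\ref{th:exist}, the variational inequality reads $\spr{\lambda,u-v}\ge 0$ for all $v\in K_h$. For \eqref{this3} we need $\spr{\lambda,v-u}\ge0$, which has the opposite sign.

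\textbf{Sign convention.} The paper's variational inequality $b_a(u,u-v)-l(u-v)\ge 0$ appears to be a sign slip. Testing the minimality of $u$ with $u+t(v-u)$, $t\in(0,1]$, and letting $t\downarrow 0$ gives the standard inequality
\[
b_a(u,v-u) - l(v-u) \ge 0 \qquad \text{for all } v\in K_h .
\]
I would therefore first rederive this directly. For $v\in K_h$ the convex combination $u+t(v-u)$ lies in $K_h$, so
\[
J_a\bigl(u+t(v-u)\bigr)-J_a(u) \ge 0 .
\]
Dividing by $t$ and letting $t\downarrow0$ gives $\spr{\lambda,v-u}\ge 0$, which is exactly \eqref{this3}. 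Equivalently, $-\lambda \in N_{K_h}(u)$, consistent with \eqref{opc}. This sign bookkeeping is the only real obstacle. The proof should state clearly that \eqref{this3} is the correctly signed form of the variational inequality.

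\textbf{Sufficiency.} Suppose $(u,\lambda)$ satisfies \eqref{this1}--\eqref{this3}. By \eqref{this2}, $u\in K_h$. Substituting \eqref{this1} into \eqref{this3} gives
\[
b_a(u,v-u)-l(v-u)\ge0 \qquad \text{for all } v\in K_h .
\]
For any $v \in K_h$, convexity and symmetry of $b_a$ give
\[
J_a(v)-J_a(u)=\bigl[b_a(u,v-u)-l(v-u)\bigr]+\tfrac12 b_a(v-u,v-u)\ge0 .
\]
Hence $u$ minimizes \eqref{eq:main}. By the uniqueness part of Theorem~\ref{th:exist}, $u$ is the solution.

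\textbf{Uniqueness of $\lambda$.} Finally, $\lambda$ is uniquely determined by $u$ through \eqref{this1}, since $H_0^1(\Omega)$ is the full test space. This completes the characterization.
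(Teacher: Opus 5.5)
Your proof is correct, but it takes a different route from the paper. The paper's proof only cites an abstract Lagrange-multiplier/subdifferential sum rule (Thm.~9.5.5 in \cite{att}) for $J_a = \tfrac12 b_a(\cdot,\cdot) - l + i_{K_h}$. It justifies the constraint qualification by continuity of the quadratic part, so that $0 \in \partial J_a(u)$ splits into $b_a(u,\cdot)-l(\cdot)$ plus an element of $N_{K_h}(u)$.

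You instead observe that \eqref{this1} forces $\lambda = b_a(u,\cdot)-l(\cdot)$. Existence of the multiplier is then trivial, and \eqref{this3} is just the first-order variational inequality, which you get from one-sided difference quotients along the feasible segments $u+t(v-u)$. The converse follows from the exact expansion $J_a(v)-J_a(u) = [b_a(u,v-u)-l(v-u)] + \tfrac12 b_a(v-u,v-u)$ together with $b_a(v-u,v-u)\ge 0$.

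What your route buys: it is self-contained, and it shows that no constraint qualification is really needed here, since the smooth part is G\^ateaux differentiable and the sum rule with an indicator is elementary. It also proves both directions of the ``characterization'' and the uniqueness of $\lambda$ explicitly. What the paper's citation buys is generality (nonsmooth convex energies, more general convex constraints), at the cost of hiding that $\lambda$ is simply the residual.

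Your sign remark is also right. Minimality gives $b_a(u,v-u)-l(v-u)\ge 0$ for all $v\in K_h$. This agrees with $-v^*\in N_{K_h}(u)$ in the proof of Theorem~\ref{th:exist} and with \eqref{this3}, whereas the inequality printed in the statement of Theorem~\ref{th:exist} has the argument $u-v$ with the wrong orientation.
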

\begin{proof}
This follows from, e.g., \cite[Thm.~9.5.5]{att}. Note that the constraint qualification is satisfied, since the functional $ u \mapsto \frac{1}{2} b_a(u,u) - l(u)$ is continuous. 
\end{proof} 

Next, following \cite[p.~339]{att}, we rewrite the above problem into a KKT-type condition. For this, note that we may chose $v = h$ in \eqref{this3} and then $v = 2 u -h \in K_h$ to find that $\spr{\lambda , u-h} = 0$. Inserting this into \eqref{this3} and setting $v  = h + w$, $w \geq 0$, we find that $\spr{\lambda ,w} \geq 0 $ for all $w \geq 0$. This is the weak form of the pointwise constraint $\lambda \geq 0$. Thus, we obtain the equivalent optimality conditions
    \begin{alignat}{2}
        b_a(u,v)  &= l(v) + \spr{\lambda ,v } \,, & \qquad &\forall \, v \in H_0^1(\Omega) \,, \label{cc1}
        \\
        u &\geq h \,, & \quad &\text{a.e.\ in } \Omega \,, \label{cc2}
        \\
        \spr{\lambda ,w} &\geq 0 \,, & \quad &\forall \, w \geq 0 \,, \label{cc3}
        \\
        \spr{\lambda ,h -u} &= 0 \,.& \label{cc4}
    \end{alignat}
Conversely, taking $v = h +w$, it follows that these conditions imply \eqref{this1}--\eqref{this3}, and thus they are equivalent. Furthermore, under the assumption(!) that $\lambda$ is more regular than $H^{-1}(\Omega)$ and that it can be represented by an $L_2(\Omega)$-function, such that $\spr{\lambda ,v} = \int \lambda(x) v(x) \, dx$ and if $u,a,f$ are also sufficiently regular to be able to write \eqref{cc1} in its strong form, then the above conditions yield the classic KKT system
    \begin{alignat*}{2}
        -\div(a \nabla u) &= f + \lambda \,, &\quad &\text{in } \Omega \,,
        \\ 
        u & = 0 \,, & \quad &\text{a.e.\ on } \partial \Omega \,,
        \\
        \lambda & \geq 0 \,, & \quad  &\text{a.e.\ in } \Omega \,,
        \\
        u & \geq h \,, & \quad  &\text{a.e.\ in } \Omega \,,
        \\ 
        \lambda(u-h) &= 0 \,, & \quad  &\text{a.e.\ in } \Omega \,, 
    \end{alignat*}
which one can understand as the ``completion'' of \eqref{problem} required for well-definedness.

Instead of a KKT system, the optimality condition \eqref{opc} may also be rewritten in a fixed-point form. For this, consider the projector onto the optimality constraint space:
    \begin{equation*}
        P_{K_h} y :=\argmin_{v \in K_h} \frac{1}{2} \norm{v - y}_{H_0^1(\Omega)}^2 = \argmin_{v \in H_0^1(\Omega)} \frac{1}{2} \norm{v - y}_{H_0^1(\Omega)}^2 +  i_{K_h}(v) \,, 
        \qquad \forall \, y \in H_0^1(\Omega) \,.
    \end{equation*}
The minimizer above exists due to standard arguments \cite{Bauschke_Combettes_2017}, and is characterized by
    \begin{equation}\label{HELP}
        P_{K_h} y = ( I  +\partial \, i_{K_h})^{-1} y \,, 
    \end{equation}
which is single valued. Note that if $y \in H^{-1}(\Omega)$, then $P_{K_h}y$ can still be well-defined by adapting the above minimization problem, and replacing $I$ in \eqref{HELP} with the embedding 
    \begin{equation*}
        \imath : H_0^1(\Omega) \to H^{-1}(\Omega) \,,
        \qquad
        y \mapsto \kl{v \mapsto \spr{y,v}_{H_0^1(\Omega)}} \,.
    \end{equation*}
With this, we can rewrite \eqref{opc} in a fixed-point form, namely
    \begin{equation*}
    \begin{split}
        &\imath  u + \tau \kl{ b_a(u,\cdot) - l(\cdot)} + \tau \partial \ i_{K_h}(u) \ni \imath  u 
        \\
        & \qquad \Longleftrightarrow \qquad
        \imath  u  + \tau \partial \ i_{K_h}(u) \ni \imath  u - \tau \kl{ b_a(u,\cdot) - l(\cdot)}
        \\
        & \qquad \Longleftrightarrow \qquad
        u = P_{K_h} ( \imath  u - \tau (b_a(u,\cdot) - l(\cdot))) \,,
    \end{split}
    \end{equation*}
for an arbitrary $\tau>0$, and where we note that $\tau \partial \, i_{K_h}(u) = \partial \, i_{K_h}(u)$. Applying a fixed-point iteration now yields the proximal gradient method (see, e.g., \cite{beck})
    \begin{equation}\label{projgrad} 
        u_{n+1} = P_{K_h} \kl{\imath u_n - \tau (b_a(u_n,\cdot) - l(\cdot))} \,, 
    \end{equation}
which can be used to solve the forward problem. The drawback of this formulation is the need of the $H_0^1(\Omega)$-projection instead of the convenient $L_2(\Omega)$-projection, given by 
    \begin{equation*}
        P_{K_h}^{L_2}y = h + \max(y-h,0)\,.
    \end{equation*}
However, this drawback can be remedied in a finite-dimensional context; cf.~Section~\ref{sect_discrete}.

\vspace{10pt}\noindent
\textbf{Boundary constraints:} Note that instead of inner constraints caused by an indenter, one can also consider an analogous setting to \eqref{problem} or \eqref{eq:main} with boundary constrains. For example, assume that we are given a contact area $\Gamma_c \subset \partial \Omega$ on which we have the constraint $u(x) \geq h(x)$, and let, for simplicity, $u=0$ on $\Gamma_0:= \Omega \setminus \overline{\Gamma_c}$. Furthermore, let
    \begin{equation*}
        H_{0,\Gamma_0}^1(\Omega):= \Kl{ v \in H^1(\Omega) \,\vert\, v=0 \mbox{ on } \Gamma_0 }\,,
    \end{equation*}
and, given $h \in H_{0,\Gamma_0}^1(\Omega)$, define the modified constraint set as 
    \begin{equation*}
        K_h^{\operatorname{bnd}} := \Kl{ v \in H^{1}(\Omega) \,\vert\, v(x) \geq h(x) \mbox{ a.e.\ in } \Gamma_c }\,.
    \end{equation*}
Analogously to \eqref{funct}, we then consider the following contact problem in functional form:
    \begin{equation}\label{funct_bnd}
        u = \argmin_{v \in H_{0,\Gamma_0}^1(\Omega)} J_a^{\operatorname{bnd}}(v) \,, 
        \qquad \text{with} \qquad 
        J_a^{\operatorname{bnd}}(v) := \tfrac{1}{2} b_a(v,v) - l(v) + i_{K_h^{\operatorname{bnd}}}(v) \,.
    \end{equation}
By continuity of the trace operator, the set $K_h^{\operatorname{bnd}}$ is again a closed convex affine cone. Hence, all of the above characterizations for the inner constraint problem \eqref{eq:main} or \eqref{funct} also hold in a similar form for \eqref{funct_bnd} with a Lagrange multiplier in $H^{-1/2}(\Gamma_c)$, which now has the interpretation of an additional Neumann boundary condition on $\Gamma_c$.

\section{The inverse contact problem}\label{sect_inverse}

In this section, we consider the inverse contact problem of finding the coefficient $a$ from either a single or multiple (direct or indirect) measurements of $u$. We define

\vspace{10pt}\noindent
\textbf{The inverse contact problem:} Given $f_i \in H^{-1}(\Omega)$ and $h_i \in H_0^1(\Omega)$ for $i = 1,\ldots K$, as well as observation operators $B_i :H^1(\Omega) \to L_2(\Omega_i')$, with $\Omega_i' \subseteq \Omega$, find the coefficient $a$ from data 
    \begin{equation*}
        y= \kl{B_i u_i}_{i=1}^K \,,
    \end{equation*}
where $u_i$ is a solution of \eqref{eq:main} with coefficient $a$ and given forces $f_i$ and constraints $h_i$. 

\vspace{10pt}

In case that the observation operator $B$ is the identity and $\Omega_i' = \Omega$, we have full (internal) measurements of $u_i$ on $\Omega$. Next, we define the parameter-to-data map
    \begin{equation}\label{eq:defF}
    \begin{split}
        F: D(F) \subset X &\to \prod_{i=1}^K L_2(\Omega'_i) \,, 
        \\ 
        a &\to \kl{ B_i u_i }_{i=1}^K \,,
    \end{split} 
    \end{equation}   
for some $X \subseteq L_\infty(\Omega)$, where
    \begin{equation*}
        D(F) := \Kl{ a \in X \,\vert\, c_1 \leq a(x) \leq c_2 \,\text{ a.e.\ in } \Omega } \,.
    \end{equation*}
With this, the inverse contact problem can be written in the standard form
    \begin{equation}\label{Fa=y}
        F(a) = y \,,
    \end{equation}
which is a generalization of the well-known coefficient inverse problem of finding the coefficient $a$ in the equation $-\div (a \nabla u) = f$ from full or partial knowledge of~$u$. This classic coefficient inverse problem and the inverse contact problem are related as follows: 

\begin{itemize}
    \item If the Lagrange multiplier $\lambda$ were known in all instances, then the contact problem could be treated without any constraints by just considering the PDE, thus reducing to the classical problem. However, $\lambda$ is unknown, cannot be measured, and depends implicitly on the unknown $a$, making the contact problem more difficult. 
    \item If the contact area $\Kon$ defined in \eqref{Kon} is known (and sufficiently regular), then the (inverse) contact problem reduces to a boundary value problem in $\Omega \setminus \Kon$, where $u$ takes the Dirichlet boundary condition $h$ at $\partial \Kon$. In this case, we are again facing a standard coefficient problem in $\Omega \setminus \Kon$. However, in this paper, we are reluctant to assume the knowledge of $\Kon$, since this is only possible when considering full measurements of $u$ (which may additionally be contaminated by noise). Since in some elastic indentation tests the indenter geometry may obstruct access to $u$ near the contact area, measurements cannot be reliably obtained in its immediate vicinity in these cases. Hence, we here consider a general case where observations are potentially only available away from the contact area.  
\end{itemize} 

Another key difference between the classic coefficient inverse problem and the inverse contact problem are non-uniqueness issues for the reconstruction of $a$ that arise due to the constraints, which we investigate in the subsequent section.

\subsection{Uniqueness and non-uniqueness for the constrained and unconstrained inverse problems}

First, we recall some known standard uniqueness and non-uniqueness results for the unconstrained (i.e., without the constraint $u\geq h$) coefficient inverse problem: Uniqueness of $a$ given full measurements of $u$ was shown in \cite{Ri} (see also \cite{KoVo}) if the (Richter) condition $\max\Kl{\abs{\nabla u},\Delta u}>c_1>0$ holds in $\Omega$ and if $a$ is known on a relevant part of the boundary. Furthermore, in \cite{Ales} uniqueness was established for the Dirichlet problem in two dimensions if the coefficient $a$ is known on the boundary and the Dirichlet data have a finite number of maxima and minima. Finally, the authors of \cite{ChiconeGerlach} have shown uniqueness for the homogeneous problem on a set of characteristic lines. 

In the case of multiple measurements, \cite{Bal_Uhlmann_2013} has established uniqueness (up to a multiplicative constant) for the homogeneous problem and in case of Dirichlet boundary conditions from full measurements of $\frac{n (n+3)}{2}$ solutions $u_i$ (the anisotropic case was treated in \cite{Bal_Monard_Uhlmann2015}). In the absence of such conditions, the non-uniqueness of the coefficient is clear: As a simple example, take in 2D a solution $u(x,y) = c x + d$, $c,d \in \R$. Then any function depending only on $y$ can be added to the coefficient $a$ without affecting this solution. 

The above uniqueness conditions can immediately be transferred also to the constrained problem but only in the non-contact region, where the PDE holds. However, the contact area adds some additional difficulty, since we can show that the coefficient $a$ cannot be recovered at the interior $\inter(\Kon)$ of the contact area $\Kon$. In particular, we have the following non-uniqueness result: 

\begin{theorem}\label{th3.1}
Let $a \in C^1(\Omega)$, $f \in C(\Omega)$, $h\in C^2(\overline{\Omega})$, and define
    \begin{equation*}
        D: = \inter\kl{\Kl{ x \in \inter(\Kon) \,\vert\, \nabla h(x) = 0 }} \cup \Kl{ x \in \inter(\Kon) \,\vert\, \div (a \nabla h(x))  \neq f(x)  } \,.
    \end{equation*}
Furthermore, let $u_a$ be a solution of the contact problem \eqref{eq:main} with given $a,f,h$. Then, there exists an $\tilde{a}(x): = a(x) +\delta a \in D(F)$, where $\delta a \in C^1(\Omega)$ is supported in a compact subset of $D$ such that $u_a$ is also a solution of \eqref{eq:main} with the modified coefficient~$\tilde{a}$. In particular, $a$ is not uniquely determined by $u_a$ in any compact subset of~$D$.
\end{theorem}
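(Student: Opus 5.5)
The plan is to use the Lagrange-multiplier characterization \eqref{cc1}--\eqref{cc4}. We exhibit a multiplier $\tilde\lambda$ for the modified coefficient $\tilde a = a+\delta a$ such that the \emph{same} $u_a$ together with $\tilde\lambda$ satisfies \eqref{cc1}--\eqref{cc4}. Since these conditions are equivalent to \eqref{this1}--\eqref{this3} and hence to the variational inequality of Theorem~\ref{th:exist}, $u_a$ is then the (unique) solution of \eqref{eq:main} with coefficient $\tilde a$.

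\textbf{Defining $\tilde\lambda$.} Let $\lambda$ be the multiplier for $a$, and set
\begin{equation*}
\spr{\tilde\lambda,v} := \spr{\lambda,v} + \int_\Omega \delta a\, \nabla u_a\cdot \nabla v \, dx .
\end{equation*}
With this definition, \eqref{cc1} for $\tilde a$ holds trivially, and \eqref{cc2} is unchanged.

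\textbf{Complementarity \eqref{cc4}.} Since $\operatorname{supp}\delta a$ is a compact subset of $\inter(\Kon)$ and $u_a=h$ there, we have $\nabla(h-u_a)=0$ a.e.\ on $\operatorname{supp}\delta a$. Hence $\spr{\tilde\lambda,h-u_a} = \spr{\lambda,h-u_a}=0$.

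\textbf{Positivity \eqref{cc3}.} This is the only real issue. Because $\operatorname{supp}\delta a\subset\inter(\Kon)$, where $\nabla u_a=\nabla h$, integration by parts gives, in the sense of distributions,
\begin{equation*}
\tilde\lambda = \lambda - \div(\delta a\,\nabla h) = \lambda - \nabla\delta a\cdot\nabla h - \delta a\,\Delta h .
\end{equation*}
We also need $\lambda$ itself in explicit form. On the open set $\inter(\Kon)$ we have $u_a=h$, and \eqref{cc1} with test functions supported there gives $\lambda = -\div(a\nabla h) - f$. The regularity $a\in C^1$, $h\in C^2$, $f\in C$ makes this a continuous function on $\inter(\Kon)$, and \eqref{cc3} forces $\lambda\ge0$ pointwise there. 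We then treat the two open pieces of $D$ separately.
\begin{itemize}
\item On $D_1:=\inter\{x\in\inter(\Kon)\,\vert\,\nabla h=0\}$ we have $\nabla h\equiv0$ on an open set, so $\Delta h=0$ there as well. Therefore $\div(\delta a\nabla h)=0$ for any $\delta a\in C^1_c(D_1)$, and $\tilde\lambda=\lambda\ge0$.
\item On $D_2:=\{x\in\inter(\Kon)\,\vert\,\div(a\nabla h)\neq f\}$, which is open by continuity, we have $\lambda=-\div(a\nabla h)-f\neq0$ and $\lambda\ge0$, so $\lambda>0$. Fix a compact $M\subset D_2$; then $\lambda\ge\mu>0$ on $M$. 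Choose $\delta a\in C^1$ supported in $M$ with $\|\delta a\|_{C^1}$ so small that $|\nabla\delta a\cdot\nabla h+\delta a\Delta h|\le \|\delta a\|_{C^1}\|h\|_{C^2(\overline\Omega)}<\mu$. Then $\tilde\lambda>0$ on $M$.
\end{itemize}
In both cases $\tilde\lambda=\lambda\ge0$ outside $\operatorname{supp}\delta a$. Since $\tilde\lambda$ differs from $\lambda$ only by a function on a compact set inside $\inter(\Kon)$, this gives $\spr{\tilde\lambda,w}\ge0$ for all $w\ge0$. If the support is to meet both pieces, we take a sum of such perturbations; by linearity of the correction term each piece keeps its sign.

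\textbf{Admissibility and non-uniqueness.} To ensure $\tilde a\in D(F)$ we use the strict bounds $c_1<a<c_2$. On the compact support these bounds hold with some margin, so shrinking $\|\delta a\|_\infty$ further keeps $c_1\le\tilde a\le c_2$. Choosing $\delta a\not\equiv0$ then shows that $a$ is not determined by $u_a$ on any compact subset of $D$.

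The main obstacle is the positivity step: identifying $\lambda$ as a genuine continuous, nonnegative function on $\inter(\Kon)$, and getting the strict positivity on $D_2$ that absorbs the perturbation $-\div(\delta a\nabla h)$. The rest is bookkeeping with \eqref{cc1}--\eqref{cc4}.
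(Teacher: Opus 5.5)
Your proposal is correct and follows the paper's proof essentially step by step. It uses the same multiplier $\tilde\lambda=\lambda+b_{\delta a}(h,\cdot)$ and the same complementarity argument. It also makes the same split of $D$ into the part where $\nabla h\equiv 0$ (so $\tilde\lambda=\lambda$) and the part where $\lambda$ is a continuous function, bounded below on compacts, that absorbs the $C^1$-small correction $-\div(\delta a\nabla h)$; your localization remark and the check $\tilde a\in D(F)$ only make explicit what the paper leaves implicit.

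One caveat you share with the paper: since $\lambda=-\div(a\nabla h)-f$ on $\inter(\Kon)$, the set where $\lambda\neq 0$ is $\{\div(a\nabla h)\neq -f\}$. So the inference ``$\div(a\nabla h)\neq f\Rightarrow\lambda\neq 0$'' on $D_2$ holds only once this sign slip in the statement is corrected, which is clearly the intended reading.
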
 
\begin{proof}
First, consider a coefficient perturbation in the interior of the contact area, i.e., $\tilde{a}(x) := a(x) +  \delta a(x)$ for some $\delta a(x)$, with
    \begin{equation*}
        \text{supp}(\delta a) \subset \inter(\Kon)
        \qquad \text{and} \qquad \delta a \in L_\infty(\Omega) \cap C^1(\inter(\Kon)) \,,
    \end{equation*}
the precise form of which is specified in detail below. We now show that such a perturbation does not influence the solution $u_a$. For this, let the bilinear form $b_a$ be given as in \eqref{def_b_l}. The optimality conditions for the given solution $u_a$ and the associated Lagrange multiplier $\lambda_a$ and for $\tilde{a}$ are
    \begin{equation*} 
        b_{\tilde{a}}(u,v) = b_a(u,v) +  b_{\delta a}(u,v) = l(v) + \spr{\lambda_a v } +  b_{\delta a}(u,v) \,,
    \end{equation*}
in addition to the conditions \eqref{cc2}--\eqref{cc4}. Since $\delta a$ has support in $\Kon$, we have $ b_{\delta a}(u,v) = b_{\delta a}(h,v)$. Next, define the Lagrange multiplier
    \begin{equation*}
        \tilde{\lambda}:= \lambda_a +  b_{\delta a}(h,\cdot)\,,
    \end{equation*}
which represents an element in $H^{-1}(\Omega)$. With this $\tilde{\lambda}$, the first condition, \eqref{cc1}, holds. Condition~\eqref{cc4} holds since $u=h$ at the support of $\delta a$, and thus $ b_{\delta a}(h,u-h) = 0$.
 
Hence, we now only have to verify \eqref{cc3}. For this, note first that due to the smoothness assumptions, the set of all $x$ in $\inter(K)$ with $\div (a \nabla h(x)) \neq f(x)$ is open, and thus $D$ is open. Furthermore, again due to the smoothness assumptions, we have
    \begin{equation*}
        \lambda_a = -\div(a\nabla u) - f =  -\div(a\nabla h) - f \,,
        \qquad 
        \text{in} \,\, \inter(K) \,, \,
        \text{pointwise in} \, D \,, 
    \end{equation*}
as well as $\tilde{\lambda} = \lambda_a - \div(\delta a \nabla h)$. If $x \in \inter(\Kl{ x \in \inter(\Kon) \,\vert\, \nabla h(x) = 0 }) $, it follows that $\tilde{\lambda} = \lambda_a$, and thus positivity of $\tilde{\lambda}$ is inherited from $\lambda_a$, establishing \eqref{cc3} in this case.  

On the other hand, for all $x \in  \tilde{D}:= \inter(\Kl{ x \in \inter(\Kon) \,\vert\, \div (a \nabla h(x)  \not = f(x)  })$, we have $\lambda_a \not =  0$ by the pointwise equality, and thus $\lambda_a  > 0$ in this set. Hence, for an arbitrary compact subset, we have $\lambda_a(x) \geq c_1>0$ by the Heine theorem. Now take an arbitrary $\phi \in C^1(\Omega)$ with compact support in $\tilde{D}$ and set $\delta a := \eps \phi$ with $\eps > 0$. Then 
    \begin{equation*}
        \tilde{\lambda} = \lambda_a - \epsilon \div(\phi \nabla h) \geq c_1 - \epsilon \norm{\phi}_{C^1} \norm{h}_{C^2} \,.
    \end{equation*}
By choosing $\epsilon$ small enough, we find $ \tilde{\lambda} \geq 0$, verifying \eqref{cc3} and completing the proof. 
\end{proof}

The above theorem roughly states that we cannot identify the coefficient $a$ at the contact area. (More precisely, formally this only holds on the contact area where $\nabla h = 0$ or where $\lambda_a \not = 0$. However, the case that this does not agree with $K_c$ represents an exceptional case which corresponds to areas where no force is required to keep contact; thus Theorem~\ref{th3.1} can be (informally) considered to hold for the full contact area $K_c$.) Note that at the exceptional set $K_c\setminus D$, one could identify $a$ by using the equation $\div(a \nabla h) = f$ up to boundary conditions of $a$ as in the classical case. 

In addition to the non-uniqueness in the contact area above, we can illustrate further examples of non-uniqueness even away from the contact area: this is related to the non-connectedness of the non-contact area and to the lack of the Richter condition (see above) therein. Note that for the homogeneous problem without constraints, the Richter conditions are not too restrictive, since the gradient and the Laplacian cannot vanish on an open set, which allowed the uniqueness result in \cite{Ales} discussed above. In contrast, this might not be the case for some standard constrained problems:

\begin{figure}[ht!]
    \begin{center}
        \includegraphics[width=0.4\textwidth]{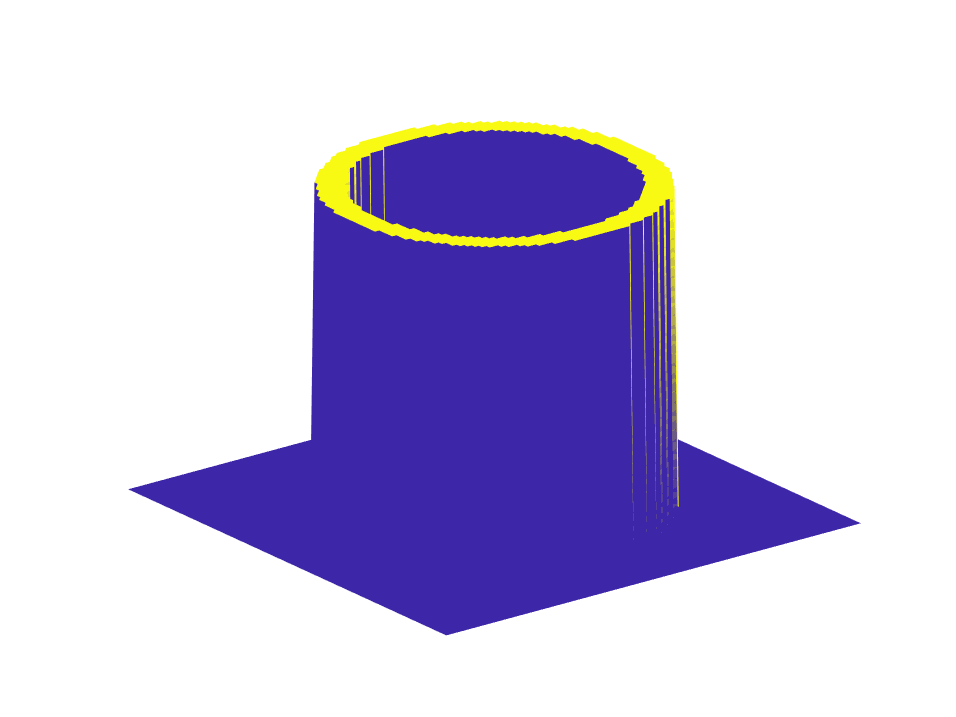}
        \includegraphics[width=0.4\textwidth]{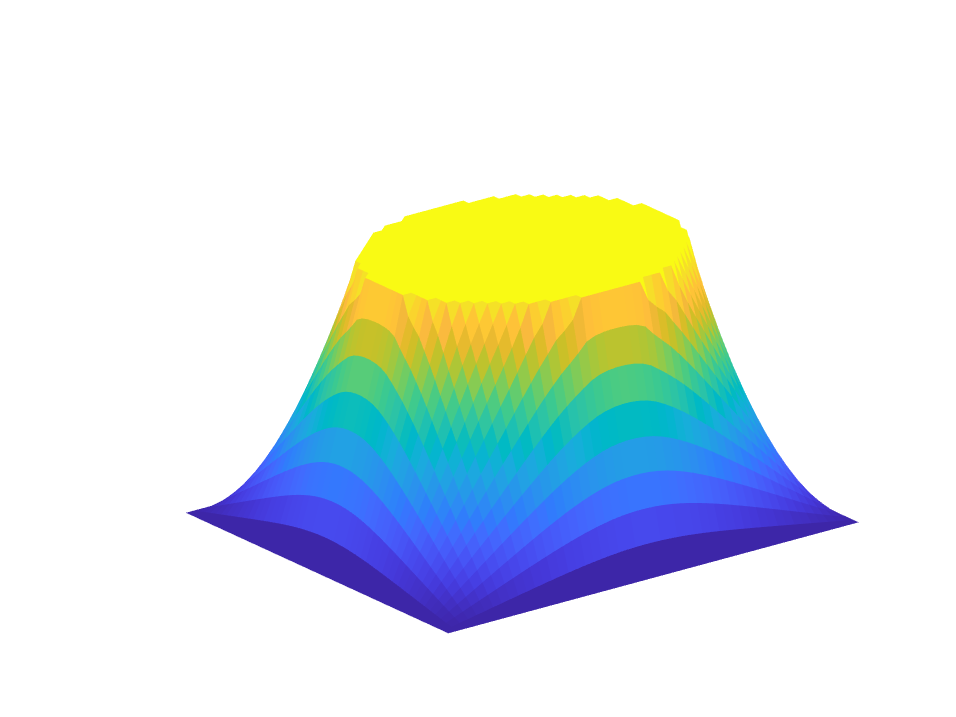}
    \end{center}
\caption{Left: constraint function $h$. Right: solution of $\div(a \nabla u) = 0$. Inside the annulus contact region, the coefficient $a$ is not identifiable.}\label{fig:ann}
\end{figure}

\begin{figure}[ht!]
\centering
\resizebox{0.5\textwidth}{!}{%
\begin{tikzpicture}
\tikzstyle{every node}=[font=\normalfont]
    \node at (5.7,11.0) {$u(x)$};
\draw[very thick] (3.75,9.25) -- (7.5,11.75);
\draw[very thick]   (7.5,11.75) -- (8.75,11.75);
\draw[very thick]   (8.75,11.75) -- (10,9.25);
\draw [->,thick] (2.5,8.25) -- (2.5,9.25);
\draw [->,thick] (2.5,8.25) -- (3.5,8.25);
\draw [dashed] (2.5,8.) -- (10.5,8.);
\draw[pattern=north west lines, pattern color=blue] (7.5,9.25) rectangle (8.75,11.75);
\draw[pattern=north west lines, pattern color=blue] (3.75,9.) rectangle (10,9.25);
\draw [thick,decorate,
    decoration = {brace,mirror}] (7.5,8.85) --  (8.75,8.85);
\node at (8.2,8.5) {$K_c$};   
    
  \begin{scope}[shift={(0,-4)}]
    \node at (3.4,10.5) {$a(x)$};
  \node at (6.0,10.5) {$c_1$};
    \node at (9.5,10.9) {$c_2$};
\draw [thick] (3.75,10.25) -- (8.0,10.25); 
\draw [thick] (8.25,11.25) -- (10.0,11.25) ; 
\draw [thick] plot [smooth] coordinates { (8.0,10.25)    (8.25,11.25)  };
\end{scope}
\end{tikzpicture}
}%
\caption{Illustration of a non-unique parameter $a$ in a 1D contact problem: The solution is piecewise constant over the constraint 
given by $h$ (blue dashed object). Below is an illustration of the non-uniqueness of $a$: 
It is piecewise constant with smooth connection inside the contact area. The constants $c_1,c_2>0$ can 
be chosen arbitrarily.}
\label{fig:one}
\end{figure}

\begin{enumerate}[label=(\roman*)]
    \item The contact problem can induce further non-uniqueness issues even outside of the contact area $K_c$, e.g., if the contact area is not connected: Assume $f = 0$ and let $K_c$ be an annulus as in Figure~\ref{fig:ann}. Then, we have non-uniqueness at the annulus contact area but also \emph{inside} of it, since there $u = \text{const.}$ and thus $\nabla u = \Delta u = 0$.
    \item In Figure~\ref{fig:one}, we illustrate another instance of non-uniqueness due to the violation of the Richter condition in a 1D example. Here, $f = 0$ and the solution $u$, $x \in [0,1]$, satisfies the equation $-(a u')' = \lambda$, with the constraint given by $h$ indicated as the upper boundary of the blue dashed object (indenter). The graph of $u$ is printed as a thick black line for the case when $a$ is a constant: the solution is piecewise constant with contact area $K_c$. The Lagrange multipliers are $\delta$-distribution supported on the boundary of the contact area. However, it turns out that the same solution $u$ is obtained if $a$ is piecewise constant with constants $c_1$ and $c_2$ and a smooth connection inside the contact area as indicated in the figure. Thus, non-uniqueness in $a$ is apparent and occurs here due to the non-connectedness of the non-contact area. 
    
Note that this example models a string spanned over a fixed bridge, and intuitively it is clear that changing the  stress (i.e., $a$) or the material uniformly does not alter the shape of the string. Note that a similar example can be found in a 2D setting by extending $u$ constant into $y$-direction and modifying the boundary conditions in \eqref{eq:main} to homogeneous Neumann boundary condition at $y = 0$ and $y = 1$.

\end{enumerate}

The last example of non-uniqueness can be generalized to the following theorem:

\begin{theorem}\label{th:mulit}
Let $u$ satisfy the homogeneous model problem~\eqref{problem} (i.e., with $f=0$ and Dirichlet boundary data $g$). Furthermore, assume that the non-contact area $\Kl{u\not = h}$ splits into $n$ disjoint regions $\Omega_i$, $i =1,\ldots,n$. Then, there are $n$ constant $c_i$ such that the coefficient $a$ can be multiplied by $c_i$ on $\Omega_i$ giving rise to the same solution $u$. 
\end{theorem}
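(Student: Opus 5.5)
The plan is to build the new coefficient explicitly and then check the KKT conditions \eqref{cc1}--\eqref{cc4} for the unchanged solution $u$, as in the proof of Theorem~\ref{th3.1}. Fix arbitrary constants $c_i>0$. Define $\tilde a$ as $c_i a$ on $\Omega_i$. On the contact area $\Kon$ we only require $\tilde a$ to lie in $D(F)$; if smoothness is wanted, we take a smooth transition inside $\inter(\Kon)$, as in Figure~\ref{fig:one}. The goal is to produce a multiplier $\tilde\lambda\ge 0$, supported on $\Kon$, with $b_{\tilde a}(u,v)=\spr{\tilde\lambda,v}$ for all test functions $v$. Since $u$ and the constraint $h$ do not change, conditions \eqref{cc2} and \eqref{cc4} then follow automatically from $\operatorname{supp}\tilde\lambda\subset\Kon$.

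First I would localize the original multiplier. On each open set $\Omega_i$ we have $-\div(a\nabla u)=0$, so $\lambda_a$ vanishes there. Hence $\lambda_a$ is a nonnegative distribution, i.e.\ a Radon measure, supported in $\Kon$. Next I would split $b_{\tilde a}(u,v)$ over the pieces:
\begin{equation*}
b_{\tilde a}(u,v)=\sum_i c_i\int_{\Omega_i} a\nabla u\cdot\nabla v\,dx+\int_{\Kon}\tilde a\nabla h\cdot\nabla v\,dx,
\end{equation*}
using that $\nabla u=\nabla h$ a.e.\ on $\Kon$ (Stampacchia). Integrating by parts on each $\Omega_i$, and using the homogeneous equation there, leaves only the boundary terms $c_i\int_{\partial\Omega_i\cap\partial\Kon} a\,\partial_{\nu}u\,v$. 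The definition of $\tilde\lambda$ is therefore forced: it is this boundary flux plus the distribution $-\div(\tilde a\nabla h)$ on $\Kon$.

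The main step, and the real obstacle, is proving $\tilde\lambda\ge0$. For the original $a$ we know $\lambda_a\ge0$, which is the case $c_i\equiv1$. The new multiplier rescales the boundary-flux contribution coming from each $\Omega_i$ by the positive factor $c_i$. The interior part $-\div(\tilde a\nabla h)$ on $\inter(\Kon)$ is the hard term, because it is not rescaled in an obviously sign-preserving way.

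I would handle this in the setting implicit in the example. Assume either $\nabla h=0$ on the contact set, as for a flat indenter, so that the interior term vanishes. Or assume that on $\Kon$ the multiplier consists only of the flux measures on $\partial\Omega_i$, i.e.\ $\lambda_a=\sum_i\mu_i$ with $\mu_i\ge0$ concentrated on $\partial\Omega_i$, and that $\tilde a$ is interpolated so that $\tilde\lambda=\sum_i c_i\mu_i$. The precise way to show that each one-sided flux $-a\partial_\nu u|_{\Omega_i}$ is nonnegative is to test the variational inequality from Theorem~\ref{th:exist} with $v=u+\eps\varphi$, where $\varphi\ge0$ is supported near a boundary portion of a single $\Omega_i$. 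This gives the sign separately for each component. Summing with the weights $c_i>0$ preserves nonnegativity, which yields \eqref{cc3}. I expect the regularity needed to make these traces meaningful to be the delicate point. I would simply assume enough smoothness of $u$ up to $\partial\Omega_i$ and of $\partial\Kon$, in the spirit of Theorem~\ref{th3.1}.
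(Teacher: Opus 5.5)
Your proposal has a genuine gap, and it is exactly the step you flag. You obtain \eqref{cc3} for $\tilde\lambda$ only under extra hypotheses, and the route cannot be completed in general, because the stronger claim you are after is false. That claim is that $u$ remains the solution of \eqref{eq:main} for arbitrary $c_i>0$.

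Take $\Omega=(0,1)$, $a\equiv 1$, $f=0$, $g(0)=0$, $g(1)=\tfrac32$, and the smooth concave obstacle $h(x)=2x-\tfrac54(x-\tfrac{3}{10})^2$. Set $u=2x$ on $[0,\tfrac{3}{10}]$, $u=h$ on $[\tfrac{3}{10},\tfrac{7}{10}]$ and $u=x+\tfrac12$ on $[\tfrac{7}{10},1]$. This $u$ is $C^1$, lies above $h$ (it follows tangent lines of a concave function), and has $\lambda=\tfrac52\chi_{[3/10,7/10]}\ge 0$. So it is the solution, with two non-contact components. Take any admissible $\tilde a$ equal to $c_1$, resp.\ $c_2$, on these components. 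Then $\tilde\lambda=-(\tilde a u')'\ge 0$ forces the flux $\tilde a u'$ to be nonincreasing. It equals $2c_1$ on the left and $c_2$ on the right, so $c_2\le 2c_1$ is necessary, whatever $\tilde a$ does on $\Kon$.

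Since $u\in C^1$ here, $\lambda$ has no part concentrated on $\partial\Omega_i$, so your second alternative does not apply. Moreover, postulating an interpolation with $\tilde\lambda=\sum_i c_i\mu_i$ assumes the conclusion.

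Your first alternative extracts the sign of each one-sided flux from $v=u+\eps\varphi$. But a nonnegative $H^1$ test function that is positive on a piece of $\partial\Omega_i\cap\Kon$ also sees every other component bordering that piece, so only the sum is signed. For example, take $h=7-20\abs{x-\tfrac12}$, $g(0)=0$, $g(1)=10$. The solution is $u=14x$ on $[0,\tfrac12]$ and $u=7+6(x-\tfrac12)$ on $[\tfrac12,1]$, with $\lambda=(14-6)\,\delta_{1/2}$. The one-sided contributions are $14$ and $-6$, and $c_1=1$, $c_2=3$ give $\tilde\lambda=-4\,\delta_{1/2}$. Here $\Kon=\Kl{\tfrac12}$, so your hypothesis $\nabla h=0$ on $\Kon$ does not exclude this case.

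The paper's proof avoids all of this by working at the level of \eqref{problem}, the formulation named in the statement. There one only needs $-\div(a\nabla u)=0$ on $\Kl{u>h}=\bigcup_i\Omega_i$ together with $u\ge h$, with no sign condition on a multiplier. On each $\Omega_i$, $u$ solves a Dirichlet problem with data $g$ or $h$. Because $f=0$, that equation is invariant under $a\mapsto c_i a$, and the constraint does not involve $a$. So $u$ still satisfies \eqref{problem} for any $c_i>0$. This one-line observation is what your proposal is missing.

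Your concern is nonetheless well founded: the paper never checks \eqref{cc3}. By the examples above, in the variational-inequality sense of \eqref{eq:main} the statement holds for arbitrary $c_i$ only under additional hypotheses. One such set is $\nabla h=0$ on a contact set with nonempty interior, each of whose boundary pieces borders a single $\Omega_i$. This is the situation of Figure~\ref{fig:one}, where the two fluxes have opposite signs. In that setting your localized test-function argument does go through, but it proves a more restrictive theorem than the one stated.
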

\begin{proof}
On each of the regions $\Omega_i$, $u$ solves a Dirichlet problem with Dirichlet data either $g$ or $h$. Since there, $a$ can  be multiplied by a constant without altering the PDE, the result immediately follows.
\end{proof}

It should be noted that, for instance, in the annulus problem (but also for the problem in Theorem~\ref{th:mulit} as long as the contact area remains the same), the non-uniqueness is not necessarily resolved by multiple measurements: If the contact at the annulus is always active, then inside the annulus, the multiple solutions are always constant and thus linearly dependent. The non-uniqueness problem remains. Thus, to improve identifyability for multiple measurements, one should design the experiments (i.e., by altering $f$ or $h$) such that the contact area changes (see the last example in Section~\ref{sec6.2}).

\section{Regularization approaches}\label{sect_regularization}

In this section, we consider regularization approaches for the inverse contact problem. Since its formulation given in \eqref{Fa=y} is in standard form, we can use standard methods \cite{Engl_Hanke_Neubauer_1996} such as Tikhonov regularization, which computes an approximate solution as
    \begin{equation}\label{Tikh_cont}
        a_{\alpha} := \argmin_{a \in D(F)}  \norm{F(a) - y}^2 + \alpha \norm{a}_X^2 \,,
    \end{equation} 
with $\alpha$ a regularization parameter (see, e.g., \cite{Engl_Hanke_Neubauer_1996}). There exit many variations on this approach: the penalty term $\norm{a}_X^2$ may be replaced by general Banach space penalties (e.g., \cite{HofmannKaltenbacher07}) or the data-fit term by more general expressions (e.g., \cite{Scherzerbook}). Furthermore, for the non-constrained problem, it became popular to weakly include the PDE into the functional as in the all-at-once approach \cite{Kalt,Kaltall,KaltVex}. By tailoring the data-fit term, one can establish the energy-functional variational formulation leading to convex optimization problems; for the unconstrained problem, this has been put into an abstract framework in \cite{Gock,GockII}; for the constrained problem -- more precisely for the variational inequality case and for \emph{full} observations -- this has been extended in \cite{Gwinner2018,Gwinnerbook,Gwinner2,Gwinner1,KhanMot,KhanRac}. Note that, since we here treat the limited data case, these latter approaches cannot be straightforwardly extended to our case. Hence, the most flexible choice here is standard Tikhonov regularization and related iterative regularization schemes such as variants of gradient descent for the least-squares functional with appropriate stopping rules. 

In order to verify the standard assumptions required for the convergence analysis of Tikhonov regularization, we first need to analyze of the parameter-to-solution mapping: 

\begin{theorem} 
Let $f_i \in H^{-1}(\Omega), h_i \in H_0^1(\Omega)$, $i=1,\dots,n$, and $F$ be defined as in~\eqref{eq:defF}, where $X$ is a Hilbert space compactly embedded into $L_\infty(\Omega)$. Assume that $B_i$ is continuous from the weak $H^1(\Omega)$ topology into $L_2(\Omega_i)$. Then $F$ is weakly sequentially closed.
\end{theorem}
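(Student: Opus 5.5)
Weak sequential closedness means the following. Take $a_k \in D(F)$ with $a_k \rightharpoonup a$ in $X$ and $F(a_k) \rightharpoonup y$ (or $\to y$). We must show $a \in D(F)$ and $F(a) = y$. The plan is to exploit the compact embedding to upgrade convergence of the coefficients, and then pass to the limit in the variational inequality of Theorem~\ref{th:exist}.

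\textbf{Step 1: the coefficients.} Since $X$ embeds compactly into $L_\infty(\Omega)$, weak convergence in $X$ gives $a_k \to a$ strongly in $L_\infty(\Omega)$. The pointwise bounds $c_1 \le a_k \le c_2$ a.e.\ pass to the limit, so $a \in D(F)$.

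\textbf{Step 2: uniform bound on the states.} Fix $i$ and let $u_k := u_i(a_k)$ solve \eqref{eq:main} with data $f_i, h_i$. Test the variational inequality with $v = h_i \in K_{h_i}$:
\begin{equation*}
b_{a_k}(u_k, u_k - h_i) \le l(u_k - h_i).
\end{equation*}
Coercivity, with constant $c_1$ uniform in $k$, together with Poincar\'e's inequality yields
\begin{equation*}
c_1 \norm{\nabla u_k}_{L_2}^2 \le c_2 \norm{\nabla u_k}_{L_2}\norm{\nabla h_i}_{L_2} + \norm{f_i}_{H^{-1}}\bigl(\norm{u_k}_{H^1} + \norm{h_i}_{H^1}\bigr).
\end{equation*}
Hence $\norm{u_k}_{H_0^1}$ is bounded. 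We then extract a subsequence $u_k \rightharpoonup \bar u$ in $H_0^1(\Omega)$. Since $K_{h_i}$ is closed and convex (Lemma~\ref{lemma_Kh}), it is weakly closed, so $\bar u \in K_{h_i}$.

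\textbf{Step 3: passing to the limit (the main obstacle).} The term $b_{a_k}(u_k,u_k)$ is quadratic in a merely weakly convergent sequence, so we cannot pass to the limit in it directly. We handle this with weak lower semicontinuity. Rewrite the inequality as
\begin{equation*}
b_{a_k}(u_k,u_k) \le b_{a_k}(u_k,v) + l(u_k - v) \qquad \text{for all } v \in K_{h_i}.
\end{equation*}
Treat the right-hand side first. Write $b_{a_k}(u_k,v) = \int a_k \nabla v\cdot\nabla u_k$. Here $a_k\nabla v \to a\nabla v$ strongly in $L_2$, because $a_k \to a$ in $L_\infty$, while $\nabla u_k \rightharpoonup \nabla\bar u$ weakly. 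Hence $b_{a_k}(u_k,v) \to b_a(\bar u,v)$, and likewise $l(u_k - v) \to l(\bar u - v)$.

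For the left-hand side, we split
\begin{equation*}
b_{a_k}(u_k,u_k) = b_a(u_k,u_k) + \int (a_k - a)\abs{\nabla u_k}^2 .
\end{equation*}
The second term is bounded by $\norm{a_k - a}_{L_\infty}\norm{\nabla u_k}_{L_2}^2 \to 0$. The first is a convex continuous quadratic form, hence weakly lower semicontinuous, so $\liminf_k b_a(u_k,u_k) \ge b_a(\bar u,\bar u)$.

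Combining both sides gives $b_a(\bar u, \bar u - v) \le l(\bar u - v)$ for all $v \in K_{h_i}$. This is exactly the characterization in Theorem~\ref{th:exist}, so $\bar u = u_i(a)$. By uniqueness the whole sequence converges weakly, not just a subsequence.

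\textbf{Step 4: conclusion.} By the assumed continuity of $B_i$ from the weak $H^1$ topology into $L_2(\Omega_i')$, we get $B_i u_k \to B_i u_i(a)$. Limits are unique (weak limits suffice), so $y = F(a)$, which proves weak sequential closedness.
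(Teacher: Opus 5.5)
Your argument is correct and follows the same structure as the paper's proof. The compact embedding gives $\norm{a_k-a}_{L_\infty}\to 0$ and $a\in D(F)$, a uniform $H_0^1$ bound gives a weakly convergent subsequence, and the limit is identified through weak lower semicontinuity and the same error term $\int_\Omega (a_k-a)\abs{\nabla u_k}^2\,dx\to 0$, followed by uniqueness. The only real difference is that you pass to the limit in the variational inequality rather than in the energy $J_a$. Your sign $b_a(u,u-v)\le l(u-v)$ is the correct one; the statement of Theorem~\ref{th:exist} has it reversed. Your a priori bound, obtained by testing with $v=h_i$, is also more careful than the paper's, which omits the contribution of $f$.
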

\begin{proof}
Without loss of generality, we can assume that $n =1$, and thus write $f,h,u,B$ instead of $f_i,h_i,u_i,B_i$ throughout this proof. Now for any $a \in D(F)$, let $u_a \in H_0^1(\Omega)$ be the well-defined minimizer of $J_a$ defined in \eqref{funct}. Assume that $a_n \rightharpoonup_X a$  and $F(a_n) \to z$. Then $a \in D(F)$, and with the Poincare inequality, it follows that
    \begin{equation*} 
        c_1 \norm{u_{a_n}}_{H^1}^2 \leq \int_{\Omega} a_n(x) \abs{\nabla u_{a_n}(x)}^2 \, dx  \leq \int_{\Omega} a(x) \abs{\nabla u_{a_n}(x)}^2 \, dx  + i_{K_h}(u_{a_n}) \leq c_2\int_{\Omega} \abs{\nabla h} \, dx \,. 
    \end{equation*}
Hence, $u_{a_n}$ is uniformly bounded in $H^1(\Omega)$ and has a weakly convergent subsequence 
    \begin{equation*}  
        u_{a_n} \rightharpoonup_{H^1} w\,.
    \end{equation*} 
It is not difficult to show that $J_a$ is lower semicontinuous in $H^1(\Omega)$: This is obvious for the first part $(\tfrac{1}{2} b_a(v,v) - l(v))$, and the lower semicontinuity of $i_{K_h}(u)$ with respect to the norm convergence in $H^1(\Omega)$ follows from the fact that $K_h$ is closed in $H^1(\Omega)$; see Lemma~\ref{lemma_Kh}. Since $J_a$ is also convex, it is weakly lower semicontinuous. Furthermore,
    \begin{align*}
        J_a(u_{a_n}) &= \int_{\Omega} a(x) \abs{\nabla u_{a_n}(x)}^2 \, dx - \int_{\Omega} f(x) u_{a_n}(x) \, dx + i_{K_h}(u_{a_n}) 
        \\
        & =  \int_{\Omega} a(x) \abs{\nabla u_{a_n}(x)}^2 \, dx - \int_{\Omega} f(x) u_{a_n}(x) \, dx 
        \\
        &= \int_{\Omega} a_n(x) \abs{\nabla u_{a_n}(x)}^2 dx - \int_{\Omega} f(x) u_{a_n}(x) \, dx  
        \\
        & \qquad +
        \int_{\Omega} (a(x)-a_n(x)) \abs{\nabla u_{a_n}(x)}^2 \, dx   
        \\
        & \leq J_{a_n}(v)  + \int_{\Omega} (a(x)-a_n(x)) \abs{\nabla u_{a_n}(x)}^2 \,dx 
        \\
        & \leq J_{a}(v)  + \int_{\Omega} (a(x)-a_n(x)) \abs{\nabla v(x)}^2 \, dx + \int_{\Omega} (a_n(x)-a(x)) \abs{\nabla u_{a_n}(x)}^2 \, dx  \,,
    \end{align*}
for any $v \in H_0^1(\Omega)$ due to the minimality property of $u_{a_n}$. Now since due to the compact embedding of $X$ in $L_\infty(\Omega)$ there holds $\norm{a_n-a}_\infty \to 0$, it follows that
    \begin{equation}\label{eq:open}
    \begin{split}
        \int_{\Omega} (a(x)-a_n(x)) \abs{\nabla u_{a_n}(x)}^2 \, dx \leq \norm{a_n-a}_\infty   \int_{\Omega}  \abs{\nabla u_{a_n}(x)}^2 \, dx   \leq C \norm{a_n-a}_\infty \to_{n \to \infty}  0 \,.
    \end{split}
    \end{equation}
Similarly, we obtain
    \begin{equation*}
        \int_{\Omega} (a(x)-a_n(x)) \abs{\nabla v(x)}^2 \, dx  \to_{n \to \infty}  0 \,.
    \end{equation*}
Combining the above, we find that for all $v \in H_0^1(\Omega) \cap K_h$ there holds
    \begin{equation*}  
        J_a(w) \leq \liminf_n  J_a(u_{a_n}(x))  \leq  J_{a}(v) \,.
    \end{equation*}
Hence, $w$ is a minimizer and thus by uniqueness, we obtain $w = u_a$.   
\end{proof}

Due to the above theorem, the classic convergence analysis of Tikhonov regularization is applicable, which yields the usual convergence results \cite{Engl_Kunisch_Neubauer_1989,Engl_Hanke_Neubauer_1996}. However, it does not tell us how to calculate the regularized solution $a_\alpha$, i.e., how to minimize \eqref{Tikh_cont}. In addition, we expect (but do not have a proof) that the parameter-to-solution map $F$ is non-differentiable in this case, which would imply that standard gradient descent or iterative regularization methods cannot be applied to our model problem. Recall that the simplest iterative method, namely Landweber iteration (e.g.\ \cite{Engl_Hanke_Neubauer_1996,Kaltenbacher_Schoepfer_Schuster_2009,Kaltenbacher_Neubauer_Scherzer_2008}),
defined by 
    \begin{equation}\label{landwb}
        a_{k+1} = a_k - \tau F'(a_k)^*(F(a_k) - y^\delta)
    \end{equation}
requires Fr\'{e}chet-differentiability of the forward operator. The same holds for other schemes such as (iteratively regularized) Gauss-Newton methods as well as Levenberg-Marquard-type methods \cite{Kaltenbacher_Neubauer_Scherzer_2008}. Since we strongly doubt that the Fr\'{e}chet derivative exists in our case, such approaches are not available to us without substantial modifications. 

To support the potential (and conjectured) non-differentiability of our forward operator $F$, we note that for some related problems in control theory, for instance, for the mapping $f \to u$ in the constrained case, generalized differentiability results (e.g., semismoothness) have been established  in \cite{Rauls,Christof1, Christof2}. In particular, \cite[Theorem~5.6]{Rauls} indicates that in this case the generalized derivatives are typically multi-valued, which implies that classical differentiability cannot hold. Since the linearization of the parameter-to-solution map leads to exactly such mappings (with $-\div(h \nabla u)$ as the right-hand side), this should serve as a convincing argument that our inverse problem is also not classically differentiable. Verifying, e.g., semismoothness for $F$ in appropriate spaces would be a non-trivial investigation far outside the scope of the present paper. 

Note that iterative regularization schemes with generalized derivatives (e.g. Bouligand derivatives) have been developed for inverse problems \cite{Clason1, Rommel, JinBouligand}, but since we are unsure if our problem allows for such derivatives (in the required spaces!) we do not make use of them here. Potentially, the SCD semismooth* Newton method could be used for minimizing \eqref{Tikh_cont}, since it only requires weak differentiability conditions \cite{Gfrerer_Hubmer_Ramlau_2025,Gfr24,GfrOut21,GfrManOutVal22}.

In the following, we introduce our proposed iterative regularization method for solving the inverse model problem, which essentially consist of a variant of Landweber iteration \eqref{landwb} with a modified derivative in place of $F'(a_k)$. Additionally, we use Nesterov acceleration to speed up the reconstruction process \cite{Nesterov_1983}. Our reasoning for this choice is as follows: Of course, for a fast iteration, one could try to use second order methods (such as Gauss-Newton, IRGN, Levenberg-Marquard, etc.) as mentioned above. However, two reasons speak against it: Firstly, these methods -- in their exact form -- require the solution of a large linear system involving the Fr\'echet-derivative of the forward operator, which we would like to avoid for reasons of efficiency and simplicity. (One could, of course, use inexact Newton methods such as \cite{Rieder}, but this adds another level of approximation schemes which we would like to avoid). Secondly, since in our proposed approach we anyways replace the Fr\'echet-derivative by an approximation, we expect that second-order methods are more sensitive to taking this wrong derivative. (Partly, this can be seen from the fact that the convergence theory of several of these methods rely on  range-invariance rather than the tangential cone-type conditions \cite{Kaltenbacher_Neubauer_Scherzer_2008}.) 

Thus, we propose to use a Nesterov-accelerated Landweber method for solving the inverse model problem \eqref{Fa=y}, which we found to be a robust and efficient regularization scheme. We describe this method in detail in the following section and, to avoid substantial technical difficulties, only in a finite-dimensional (discretized) setup.

\section{The finite-dimensional problems}\label{sect_discrete}

In this section, we consider the forward and inverse contact problems in a discretized setting, and discuss different (iterative) methods for the efficient solution of the direct problem, namely a projected gradient method with its accelerated variant and a barrier method. Although other methods are certainly possible, the rational for this choice is their simplicity and, as we shall see, their integrability into the regularization schemes. 

\subsection{The discretized forward contact problem}

In order to discretize the forward contact problem, we consider a continuous, piecewise linear finite-element nodal basis over a (uniform) triangulation of the domain $\Omega$, i.e., 
    \begin{equation*}
        u = \sum_{i=1}^N u_i \phi_i\,,
        \qquad
        f = \sum_{i=1}^N f_i \phi_i \,,
        \qquad \text{and} \qquad
        h: = \sum_{i=1}^N h_i \phi_i \,.
    \end{equation*}
The minimization problem \eqref{eq:main} is then considered over the corresponding finite element space, and the constraint $u \geq h$ is replaced by nodal constraints on the FEM nodes, i.e.,
    \begin{equation*} 
        u_i \geq h_i \,, 
        \qquad \forall \, i =1, \dots, N \,.
    \end{equation*}
The resulting minimization problem can again be rewritten as in the continuous setting. In this case, the Lagrange multiplier $\lambda$ then corresponds to a vector $\laf = (\lambda_i)_{i=1}^N$, and the KKT conditions are given in the usual form: Let $A$ denote the stiffness matrix, $\ff$ the load vector, and $\uf,\hf$ the coefficient vectors of the corresponding basis functions. Then the KKT system of the discretized contact problem has the form: Find $\uf,\laf$ with 
    \begin{alignat}{2} 
        A \uf &= \ff + \laf \,, & 
        \\ \label{disc:one}
        u_i &\geq h_i \,, & \qquad &\forall \, i = 1\,,\dots \,, N \,, 
        \\
        \lambda_i &\geq 0 \,, & \qquad &\forall \, i= 1\,,\dots \,, N\,, 
        \\
        \lambda_i (u_i-h_i) &= 0 \,, &\qquad &\forall \, i= 1\,,\dots \,, N \,. \label{disc:four}
  \end{alignat}
In order to solve this forward contact problem in the discretized setting, we consider two approaches, an accelerated projected gradient method and a barrier method: 

\vspace{10pt}
\noindent
\textbf{(Accelerated) projected gradient method.} For the projected gradient method, we first consider the proximal gradient method \eqref{projgrad} on the discrete level, i.e.,
    \begin{equation}\label{prox_method_discr}
        \uf_n = P_{K_h} ( \uf_n - \tau (A \uf_n - \ff))\,,
    \end{equation}
where $P_{K_h}$ now denotes the projector onto $u_i\geq h_i$, which on the coefficient level reads 
    \begin{equation*}
        (P_{K_h} \yf)_i = h_i + \max\Kl{ y_i - h_i,0 } \,,
        \qquad \text{for} \,\, \yf = (y_i)_{i=1}^N \,.
    \end{equation*} 
To speed up the iteration, we use Nesterov acceleration \cite{Nesterov_1983}, which in this case yields 
    \begin{align}\label{apg}
    \begin{split}
        \uf_n &= P_{K_h} ( \zf_n - \tau (A \zf_n - \ff)) \,, 
        \\ 
        \zf_n &= \uf_n + \frac{t_{n-1} -1}{t_{n}} (\uf_n - \uf_{n-1}) \,,  
    \end{split}
    \end{align}
where $t_{n+1} := \frac{1}{2}(1 + \sqrt{1 + 4 t_n^2})$ and $\tau$ is a stepsize parameter which has to be chosen smaller than $\norm{A}$. This method works well for the test problems presented in Section~\ref{sect_numerics}.

\vspace{10pt}
\noindent
\textbf{Barrier method.} As an alternative to the projected gradient method, we also consider a barrier method for solving the discretized constrained optimization problem. The reason for a second method is that the mapping $a \to \uf$ is presumably not continuously differentiable with respect to the parameter $a$, as can be seen from the $\max$-function appearing in the projected gradient method \eqref{prox_method_discr}, and by our remarks on semismoothness above. With regards to the inverse problem, for which differentiability is strongly beneficial, we therefore consider a smooth approximation of the constrained forward problem using a log-barrier function. For this, we define a barrier function (for the constraint $u_i \geq h_i$) as follows:
    \begin{equation*} 
        g(\uf) := 
        \begin{cases} 
            -\log(u_i - h_i + \theta)\,,  & u_i \geq h_i\,, 
            \\
            \infty \,, & \text{else} \,,  
        \end{cases}
    \end{equation*}        
where $\theta>0$ is a safeguard parameter ensuring a finite value of the logarithm. The approximate solution of the constrained forward problem is then found by minimizing
    \begin{equation}\label{barr}
        J_b(u) =  \frac{1}{2} \uf^T A \uf - \ff^T \uf + \mu g(\uf) \,, 
        \qquad \text{s.t.\ } \quad 
        u_i \geq h_i \,, \quad \forall \, i = 1\,,\dots\,,N \,,
    \end{equation}
where $\mu$ denotes a penalty parameter which is updated during the iteration, and $A$ is the stiffness matrix including the given homogeneous Dirichlet boundary conditions. In order to solve this minimization problem, we use the projected Newton-type method 
    \begin{equation}\label{proj_Newton}
        \uf_{n+1} = P_{K_h}\kl{ \uf_n - \tau \nabla^2 J_b(\uf_n)^{-1} \nabla J(\uf_n) } \,,
    \end{equation}  
with a fixed stepsize $\tau$ and an iteratively updated $\mu$ and $\theta$, which are chosen as
    \begin{equation*}
        \mu_{n+1} = 0.9 \cdot \mu_n \,,
        \qquad \text{and} \qquad 
        \theta_{n+1} = \max\Kl{ 0.9 \cdot \theta_{n},10^{-12}} \,.
    \end{equation*} 
Note that the constraint $u_i \geq h_i$ in \eqref{barr} ensures that $g(\uf)$ is smooth on the admissible set (of $\uf$), and consequently $J_b(\uf)$ is (twice) differentiable and thus \eqref{proj_Newton} is well-defined.

\subsection{The discretized inverse contact problem} 

To solve the discretized inverse contact problem, we consider regularization schemes as discussed in Section~\ref{sect_regularization}. Since the forward operator $F$ is nonlinear, iterative optimization methods are typically used to minimize the corresponding Tikhonov functional \eqref{Tikh_cont}. However, most of these methods require $F$ to be differentiable with respect to $a$, which is not necessarily the case for the contact problem. Hence, for solving the inverse problem, we slightly modify the forward problem by considering the differentiable barrier problem formulation \eqref{barr} instead. In a sense, we approximate the forward operator $F$ by a differentiable operator $\tilde{F}$, for which we then use standard Tikhonov regularization. This approximation can be justified by noting that regularization theory allows for small perturbations of the operator $F$ without impairing the convergence analysis. 

Hence, in place of the true forward operator $F$, we consider the approximate mapping $\tilde{F} a := B \tilde{\uf}$, where $\tilde{\uf}$ denotes the solution (minimizer) of the barrier problem \eqref{barr} in the discretized FEM setting. It is not difficult to verify that $\tilde{\uf} = \tilde{\uf}(a)$ satisfies 
    \begin{equation*} 
        A \tilde{\uf} - \ff  + \mu \frac{1}{ \tilde{\uf} - \hf +\theta} = 0 \,. 
    \end{equation*} 
Here and below, the vector $\frac{1}{ \tilde{\uf} - \hf +\theta}$ stands for the vector $(\frac{1}{ \tilde{\uf}_i - \hf_i +\theta})_{i=1}^n$, i.e., where the inverse is taken component-wise. Furthermore, with a slight abuse of notation, we also denote by $\frac{1}{ \tilde{\uf} - \hf +\theta}$ and similar expressions the diagonal matrix with entries $(\frac{1}{\tilde{\uf} - \hf +\theta})_i$, $i=1,\ldots,n$, representing the discrete version of the multiplication operator        
    \begin{equation*}
        v(x) \to \kl{ \frac{1}{ \tilde{\uf}(x) - \hf(x) +\theta}}  v(x) \,.
    \end{equation*}
Next, note that the parameter $a$ only appears inside the stiffness matrix $A = A(a)$, and only as a linear factor. Hence, using the above notation, the Fr\'echet derivative $\tilde{\uf}'(a)b$ can be calculated as the (unique) solution $\tilde{\uf}_a$ of
    \begin{equation*} 
        A(a) \tilde{\uf}_a + A(b) \tilde{\uf} + \mu \frac{1}{ (\tilde{\uf} - h +\theta)^2 }\tilde{\uf}_a = 0  \,,
    \end{equation*}
and thus
    \begin{equation*} 
        \tilde{\uf}'(a)b = \kl{ A(a) + \mu \frac{1}{ (\tilde{\uf} - \hf +\theta)^2 }}^{-1} A(b) \tilde{\uf} \,.
    \end{equation*} 
Furthermore, its $L_2$-adjoint is given by 
    \begin{equation*} 
        \tilde{\uf}'(a)^*\vf 
        =
        \kl{ \sum_{n=1}^N  \spr{\nabla \phi_k,\nabla \phi_n} (\tilde{\uf})_k  (\wf)_n }_{k=1}^N
        \approx
        \nabla \tilde{\uf} \cdot \nabla \wf \,,  
    \end{equation*}
where
    \begin{equation*}    
        \wf := \kl{ A(a) + \mu \frac{1}{ (\tilde{\uf} - \hf +\theta)^2 }}^{-1} \vf \,,
    \end{equation*}
for the derivation of which we used that $(A(a) + \mu \frac{1}{ (\uf - \hf +\theta)^2 })$ is a selfadjoint matrix. 

The Fr\'echet derivative $\tilde{F}'(a)$ of the full forward operator $\tilde{F}$ (barrier formulation) including the observation operators (now matrices) $B$ is then simply given by
    \begin{equation*}
        \tilde{F}'(a)b = B\, \tilde{\uf}'(a)b \,,
    \end{equation*}
and the $L_2$-adjoint $\tilde{F}'(a)^*$ now reads
    \begin{equation*}
        \tilde{F}'(a)^* y  = \tilde{\uf}'(a)^*(B^*y) \,.
    \end{equation*}

\paragraph{The iterative regularization scheme}

Having derived all the necessary ingredients above, we can now consider Landweber iteration as our iterative regularization method, which approximates the ground truth coefficient $a^\dagger$ iteratively via ($\tau$ is a stepsize)
    \begin{equation}\label{modland}
        a_{k+1} = a_k - \tau \tilde{F}'(a_k)^*(F(a_k) - y) \,.
    \end{equation}
Note that different Hilbert space regularization can be introduced by considering the adjoint $\tilde{F}'(a_k)^*$ between appropriate spaces \cite{Simonadjoint}. Furthermore, in order to accelerate the Landweber iteration \eqref{modland}, we propose to use its Nesterov accelerated version~\cite{Hubmer_Ramlau_2017}: 
   \begin{align} \label{Nest}
   \begin{split}
        b_n = a_n + \frac{n-1}{n+2} \kl{a_n - a_{n-1}} \,,
        \\
        a_{n+1}= a_n - \tau {\tilde{F'}(b_n)}^*(F(b_n) - y) \,.
        \end{split}
    \end{align}
This is the method which was implemented for obtaining our numerical results below. 

Note that both methods, \eqref{modland} and \eqref{Nest}, can be related to Tikhonov regularization by replacing the above least-squares gradient by the gradient of the Tikhonov functional. The results are comparable; the methodical difference is that the iterative schemes have to be terminated by a stopping criteria, e.g., the discrepancy principle, while for the Tikhonov-gradient variant, the regularization parameter has to be selected by a suitable parameter choice rule, and the iteration converges (if certain conditions are satisfied) to a stationary point of the Tikhonov functional. 

Concerning the convergence properties of the proposed methods \eqref{modland} or \eqref{Nest}, we here only sketch how a convergence analysis may be conducted: the main difference of \eqref{modland} and \eqref{Nest} to established methods is the use of a ``wrong'' derivative and its adjoint. However, such a possibility already exists in the seminal article on Landweber iteration for nonlinear inverse problems \cite{Hanke_Neubauer_Scherzer_1995}. There, the main condition which needs to be verified for a successful analysis of \eqref{modland} is the tangential cone-type condition 
    \begin{equation*}
        \norm{ F(a) - F(\tilde{a}) - \tilde{F}'(\tilde{a})(a-\tilde{a}) } \leq \eta \norm{ F(a) - F(\tilde{a}) } \,,
        \qquad
        \eta < \frac{1}{2} \,.
    \end{equation*}
We leave an analysis of this condition for the contact problem to future research.

\section{Numerical experiments}\label{sect_numerics}

In this section, we present a number of numerical experiments illustrating the solution of the forward and inverse contact problem with the methods introduced above. For all experiments, we use a standard ($P_1$-elements) finite-element discretization on a uniform grid over the unit square $[0,1]^2$ with $n$ gridpoints in each axial direction. The coefficient $a$ was also discretized by $P_1$-finite elements, and so was the indenter function $h$. All of the experiments were conducted in Matlab on a standard desktop computer.

\subsection{Forward problem verification}

In our first series of tests, we verify the three algorithms for the forward contact problem described above, i.e., the projected gradient method (PG), the Nesterov-accelerated projected gradient method (NPG), and the barrier method (BM). In particular, we investigate how well and how fast these algorithms can approximate an exact solution. 

As an error indicator, we use both the maximum-norm and the (discrete) $\ell_2$ norms between the computed solution $u_{\text{comp}}$ and an exact solution $u_{\text{ex}}$, i.e., 
    \begin{equation*}
        \text{err}_\infty := \norm{u_{\text{comp}}-u_{\text{ex}}}_\infty\,,
        \qquad \text{and} \qquad 
        \text{err}_2 := \norm{u_{\text{comp}}-u_{\text{ex}}}_{\ell_2} \,.
    \end{equation*} 
Here, $u_{\text{comp}}$ denotes the solution obtained by one of the above algorithms, while the reference ``exact'' solution $u_{\text{ex}}$ is computed on a refined grid ($n = 500$). Furthermore, as an optimality indicator, we use the residual norm of the KKT system, i.e., 
    \begin{equation*}
        \text{err}_{\text{KKT}} := \norm{ \begin{pmatrix} \min(A u -f,0) \\  \min(u-h,0) \\ \norm{ (Au -f)(u-h) }_\infty \end{pmatrix} }_{\ell_2} \,. 
    \end{equation*}
Here, the first expression in the norm measures the violation of $\lambda \geq 0$, the second one that of $u \geq h$, and the last one the violation of the KKT conditions. These expressions are computed nodalwise, and the Euclidean norm of all expressions is used above. 

For our first test (TestCase~1), we consider the Laplace equation, i.e., $a = 1$ and $f = 0$, with an indenter which is the indicator function on the square $[0.35,0.65]^2$. The iteration is terminated as soon as $\text{err}_{\text{KKT}}$ falls below $10^{-8}$ or a maximum number of iteration ($5000$) is reached. The only sensitive parameter for our gradient methods is the stepsize $\tau$, which should be chosen such that $\tau \norm{A}< 1$; here we use the more conservative choice $\tau = 0.1/\norm{A}$. For the barrier method, we use the stepsize $\tau = 0.1$.

\begin{figure}[ht!]
    \includegraphics[width=0.45\textwidth]{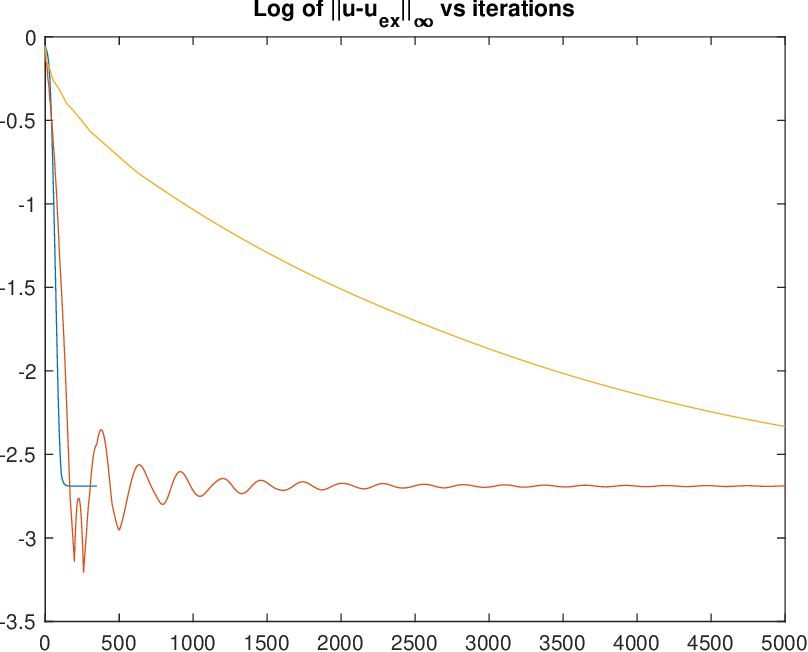}
    \includegraphics[width=0.45\textwidth]{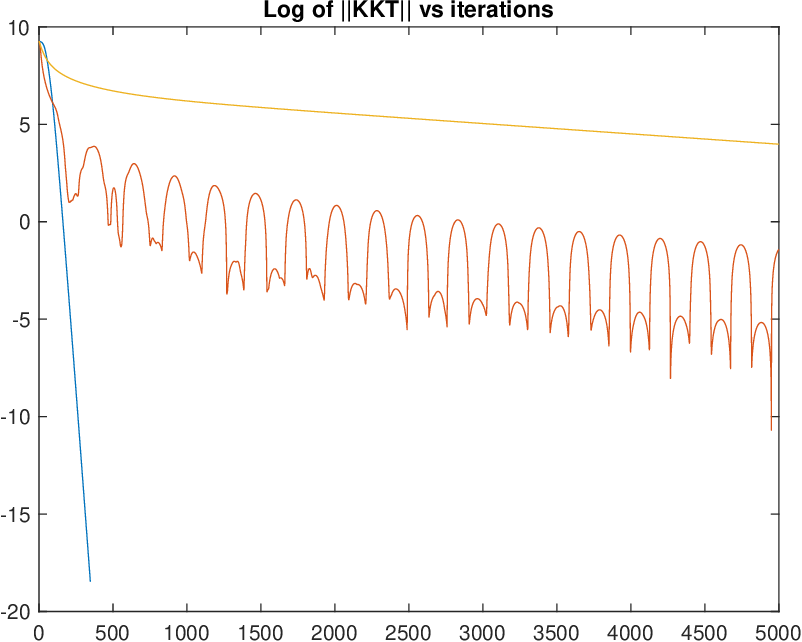}
    \caption{Log of $\text{err}_\infty$ (left) and log of $\text{err}_{\text{KKT}}$ (right) vs.\ the number of iterations for TestCase~1 and a discretization of $n=40$, for the barrier method (blue), Nesterov-accelerated projected gradient method (red), and projected gradient method (yellow).}
\label{fig1}
\end{figure}

\begin{table}[ht!]
\begin{center}
\caption{Error indicators for various discretizations, methods, and TestCase~1 and 2.}
\label{tab1}
    \begin{tabular}{ccccc}
        &  $\log(\text{err}_\infty)$  & $\log(\text{err}_{L_2})$  &
        $\begin{array}{c} \text{\# iter. for} \\
        \text{err}_{\text{KKT}} <10^{-8} \end{array}$
        & CPU-time(s)\\ \hline
        \multicolumn{4}{l}{\textbf{TestCase~1}} \\ \hline 
        $n=20$  & & &  \\ \hline 
        \multicolumn{1}{l|}{BM} & -2.35  &   -3.2988 & 323
        & 0.13\\ 
        \multicolumn{1}{l|}{NPG} & -2.35   &  -3.2988 & 3692  & 1.03\\ 
        \multicolumn{1}{l|}{PG} &  -2.35 &  -3.2988&  4834 
        & 0.69 \\ \hline 
        $n=40$  & & &  \\ \hline 
        \multicolumn{1}{l|}{BM} &-2.6893   &    -3.6913     & 347 & 0.37 \\ 
        \multicolumn{1}{l|}{NPG} &-2.6873 &    -3.6885  & $\ge $5000 & 1.40\\ 
        \multicolumn{1}{l|}{PG} &  -2.3328 &   -3.1552 &  $\ge $5000 & 0.80\\ \hline 
        $n=80$  & & &  \\ \hline 
        \multicolumn{1}{l|}{BM} &-2.8549    & -3.9528  & 347 & 1.72\\ 
        \multicolumn{1}{l|}{NPG} & -2.8875   &   -4.0151   & $\ge $5000 & 1.84\\ 
        \multicolumn{1}{l|}{PG} &  -0.7442 &   -1.4140 &  $\ge $5000 & 1.26 \\ \hline 
        $n=160$  & & &  \\ \hline 
        \multicolumn{1}{l|}{BM} &-2.9466     &   -4.0876   & 394 & 7.00 \\ 
        \multicolumn{1}{l|}{NPG} &  -3.3405     &    -4.2873   & $\ge $5000 & 3.51\\ 
        \multicolumn{1}{l|}{PG} &  -0.2463 &    -1.0694 &  $\ge $5000 & 2.96 \\ \hline 
        \multicolumn{4}{l}{\textbf{TestCase~2}} \\    \hline 
        $n=20$  & & &  \\ \hline 
        \multicolumn{1}{l|}{BM} & -2.3029  &   -4.2973  & 324 & 0.12\\ 
        \multicolumn{1}{l|}{NPG} & -2.3029    &  -4.2973  & 3667 & 0.63\\ 
        \multicolumn{1}{l|}{PG} &  -2.3029  &  -4.2973 &  4645 & 0.45\\
        \hline 
        $n=40$  & & &  \\ \hline 
        \multicolumn{1}{l|}{BM} & -2.8150    &    -4.5637      &  348 & 0.36\\ 
        \multicolumn{1}{l|}{NPG} & -2.8150 &    -4.5637   & $\ge $5000 & 0.95  \\ 
        \multicolumn{1}{l|}{PG} &   -2.8150  &   --4.5637  &  $\ge $  5000 & 0.58 \\ \hline 
        $n=80$  & & &  \\ \hline 
        \multicolumn{1}{l|}{BM} &
        -3.2752    & -5.2024  & 372 & 1.74 \\ 
        \multicolumn{1}{l|}{NPG} & -3.2752    &    -5.1872    & $\ge $5000 & 1.45\\ 
        \multicolumn{1}{l|}{PG} &  -1.2072 &    -2.1834 &  $\ge $5000 & 1.03\\ \hline 
        $n=160$  & & &  \\ \hline 
        \multicolumn{1}{l|}{BM} & -3.9207     &    -6.1688   & 395 & 7.40\\ 
        \multicolumn{1}{l|}{NPG} &  -3.9047     &     -5.2068   & $\ge $5000 & 3.19\\ 
        \multicolumn{1}{l|}{PG} &  -0.6389 &     -1.7066 &  $\ge $5000 & 2.74 \\ \hline 
    \end{tabular}
\end{center}
\end{table}

The results of this case are summarized in Table~\ref{tab1} and Figure~\ref{fig1}. In Table~\ref{tab1}, we summarize various performance indicators (errors and wall-clock CPU-time in seconds), while Figure~\ref{fig1} depicts the error curve for the maximum norm ($\text{err}_\infty$) and the KKT residual ($\text{err}_\text{KKT}$) over the number of iterations. From these results, we first observe the relative slowness of the projected gradient method, which as expected is outperformed by the other two methods. The barrier method, being a second-order method, requires the least number of iterations, exhibiting a steep decrease of the KKT residual during the iteration. Furthermore, we see that the NPG method outperforms the PG method: the error is quite small and the solution acceptable even after a moderate ($<1000$) number of iterations, after which the error remains almost constant; the method does not terminate because the KKT residual is decreasing slower than for the barrier method. 

For our second test (TestCase~2), we perform the same experiment as above but with a different set of parameters. More precisely, for the indenter $h$ we take the indicator function of a circle with center $(0.5,0.5)$ and radius $0.25$, and ``cut off'' its top by the linear function $g(x_1,x_2)= x_1$; see Figure~\ref{fig2}(left). Furthermore, we choose
    \begin{equation*}
    \begin{split}
        f(x_1,x_2) &= (-10)\sin(\pi x_1)\sin(2\pi x_2) \,,
        \\
        a(x_1,x_2) &= 1 + \tfrac{1}{2}\exp(-20((x_1-0.4)^2 + (x_2-0.5)^2)) \sin(2\pi x_1) \,,
    \end{split}
    \end{equation*}
Both the indenter and the corresponding solution are illustrated in Figure~\ref{fig2}. The results are quite similar to the first experiment; the characteristics are given in Table~\ref{tab1}.

\begin{figure}
    \includegraphics[width=0.45\textwidth]{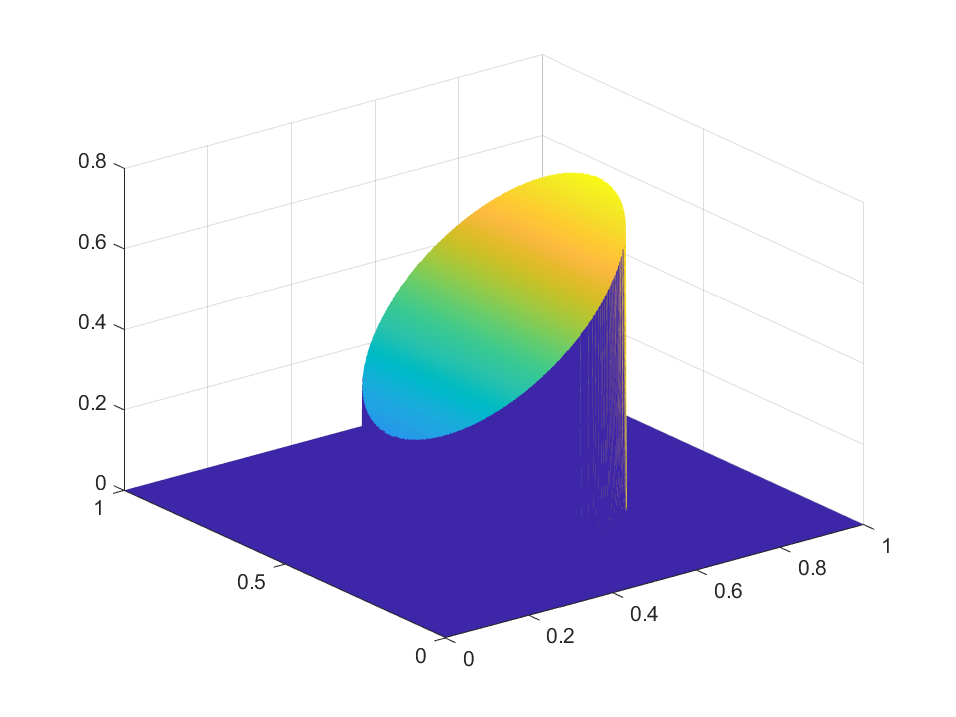}
    \includegraphics[width=0.45\textwidth]{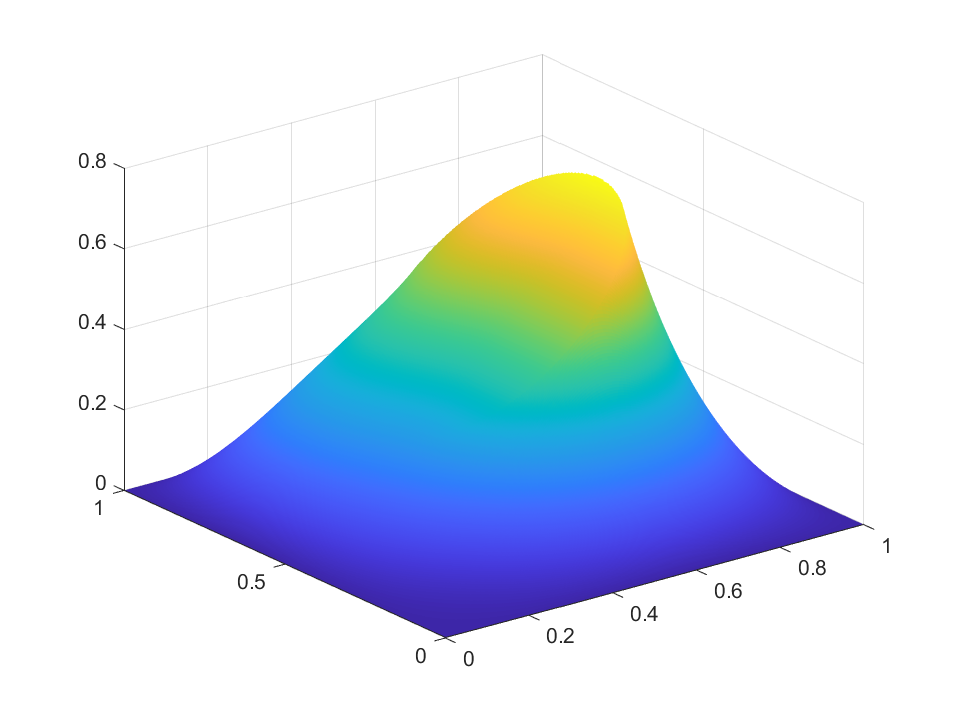}
\caption{Indenter $h$ (left) and solution (right) for TestCase~2.} \label{fig2}
\end{figure}

\subsection{Numerical results for the inverse problem}\label{sec6.2}

In our second series of tests, we consider the solution of the inverse contact problem via the iterative regularization procedure \eqref{Nest} and illustrate some of its features.

\vspace{10pt}
\noindent
\textbf{Experiment 1.}
In our first experiment, we consider the identification of the coefficient $a$ from full measurements of $u$ with different indenters and noise levels. As before, we use a fixed grid on the unit square, now with $n = 50$ nodes in each direction. For the right-hand side, we select $f = 0$, and for the ground truth parameter $a^\dagger$, we choose 
    \begin{equation*}
        a^\dagger(x_1,x_2) := 1 + \frac{1}{2} e^{-20(x_1-0.4)^2 +(x_2-0.5)^2} \sin(2\pi x_1) \,.
    \end{equation*}
We assume that $a^\dagger$ is known on the boundary $\partial \Omega$, and use $H_0^1(\Omega)$-regularization for $a$, entering via the adjoint ${\tilde{F'}(\cdot)}^*$ in \eqref{Nest}. Note that our chosen ground truth $a^\dagger$ changes its sign around the reference value $1$ to avoid simplifications due to monotonicity effects, which are known to act beneficially in parameter identification problems \cite{KindermannMono,Harrach}. 

In this experiment, we vary the noise level and the indenter size to illustrate non-identifiability at the contact shape in the worst case, and feasibility of the reconstruction method and the inverse contact problem in the best case. That is, we choose three different indenters as indicator functions of a circle centered at $(0.5,0.5)$ with radii 
    \begin{equation*}
        \text{indentorradius} \in \Kl{0.1, 0.25,0.5} \,.
    \end{equation*}
Furthermore, we vary the relative noise level with 
    \begin{equation*}
        \delta_{rel} \in \Kl{0.1\%,1 \%,10\%} \,.
    \end{equation*}
To avoid inverse crimes, the noisy data was obtained by first calculating a fine-grid exact solution $u$ and then projecting it onto the calculation grid ($n = 50$). Then, uniformly distributed random noise was added to achieve the desired noise level. In this way, the noise in our data consists of two components: discretization errors and additive noise.

\begin{figure}[p]
\begin{center} 
    \begin{minipage}{0.49\textwidth}
        \centering
        \includegraphics[width=\textwidth]{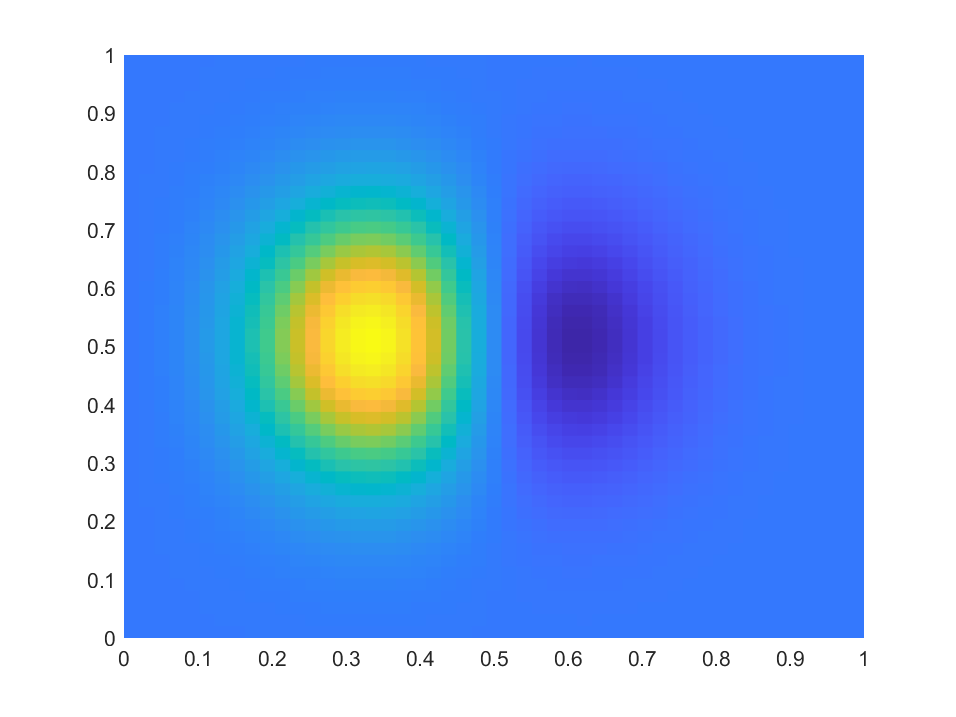}\\
        \footnotesize ground truth  $a^\dagger$
    \end{minipage} \hspace*{-5ex}
    \begin{minipage}{0.49\textwidth}
        \centering
        \includegraphics[width=0.75\textwidth]{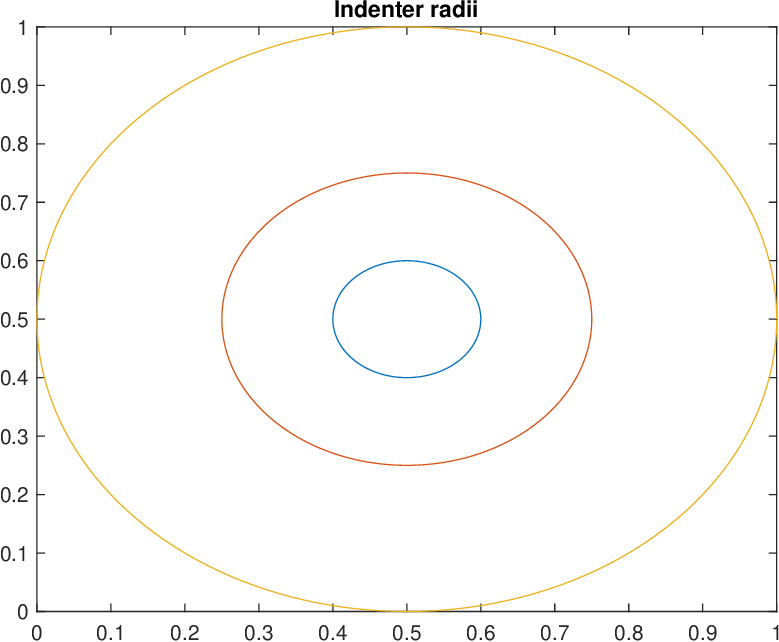}\\
        \footnotesize Size and shape of 3 indenters  
    \end{minipage}
\end{center}
\begin{center}
    \footnotesize Reconstructed $a$ for indenterradius = $0.1$
\end{center}
\vspace{-2ex}
    \begin{minipage}{0.31\textwidth}
        \centering
        \includegraphics[width=\textwidth]{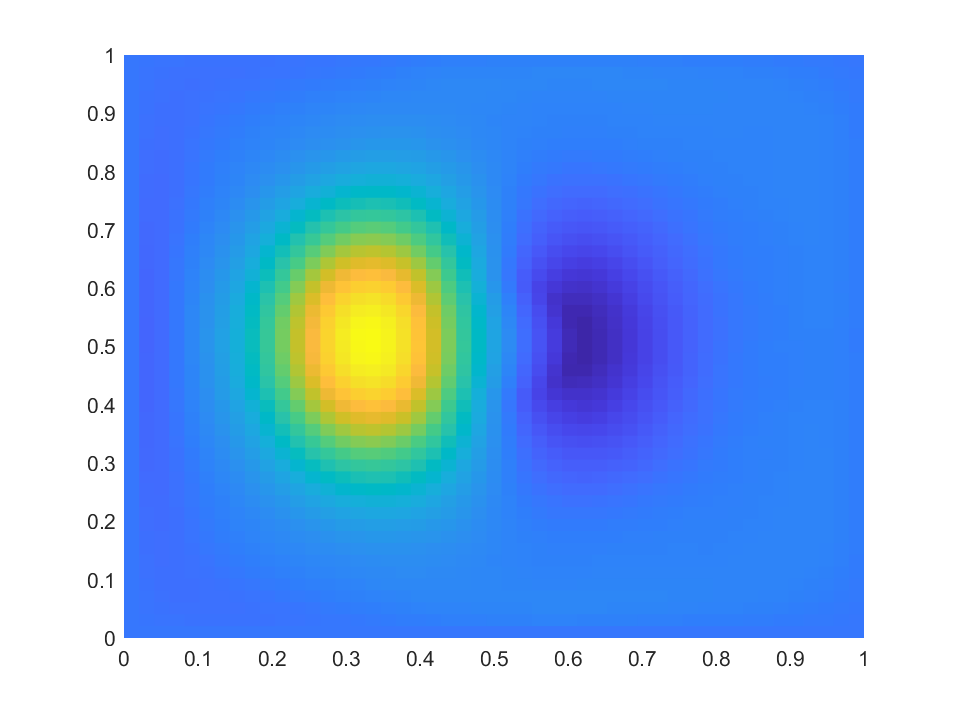}\\
        \footnotesize $\delta_{rel} = 0.1\%$
    \end{minipage}
    \begin{minipage}{0.31\textwidth}
        \centering
        \includegraphics[width=\textwidth]{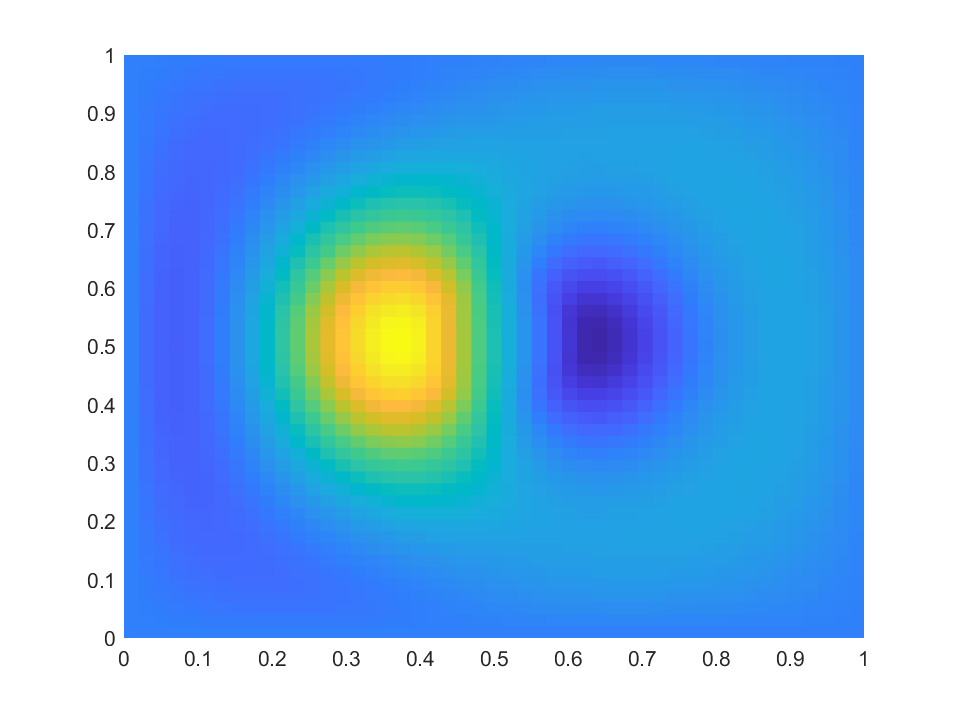}
        \\
        \footnotesize$\delta_{rel} = 1\%$
    \end{minipage}
    \begin{minipage}{0.31\textwidth}
        \centering
        \includegraphics[width=\textwidth]{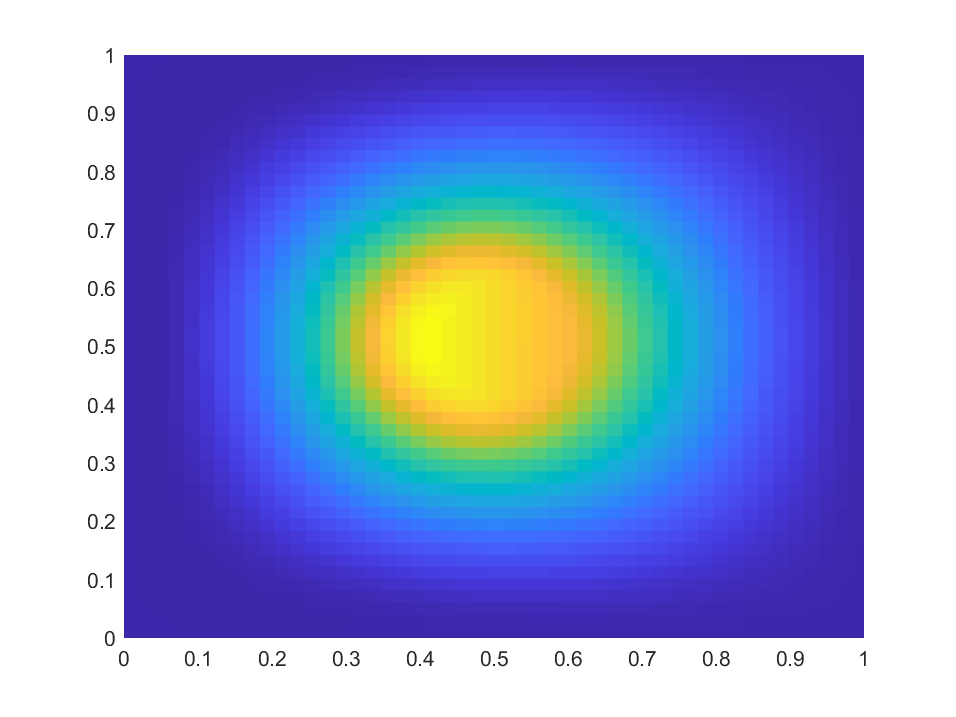}
        \footnotesize $\delta_{rel} = 10\%$
    \end{minipage}
\begin{center}
    \footnotesize 
    Reconstructed $a$ for  indenterradius = $0.25$
\end{center}
\vspace{-2ex}
    \begin{minipage}{0.31\textwidth}
        \centering
        \includegraphics[width=\textwidth]{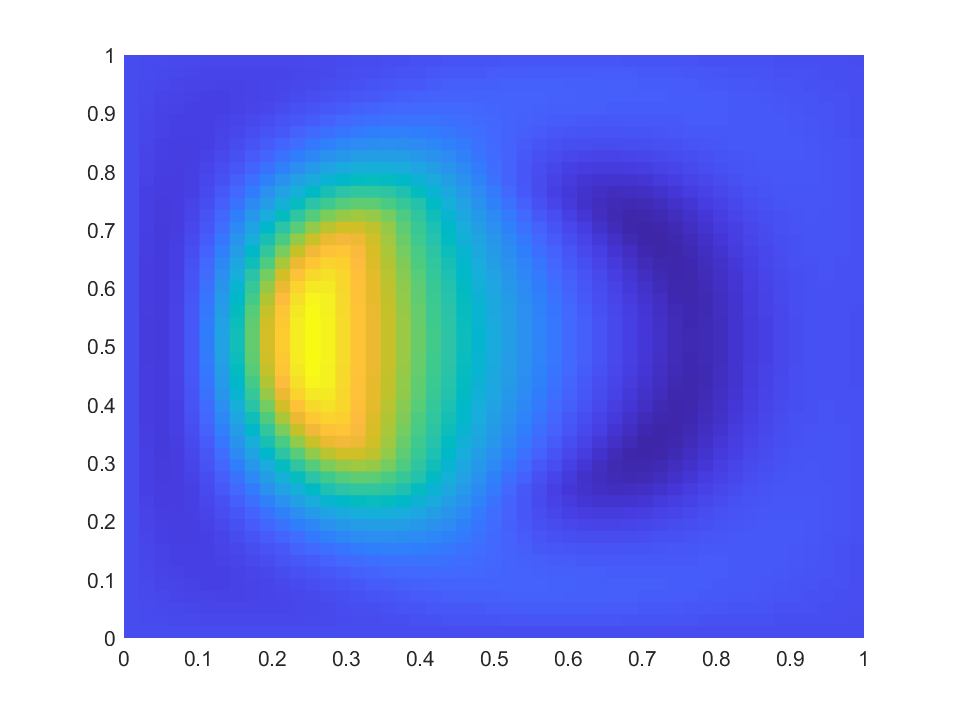}\\
        \footnotesize $\delta_{rel} = 0.1\%$
    \end{minipage}
    \begin{minipage}{0.31\textwidth}
        \centering
        \includegraphics[width=\textwidth]{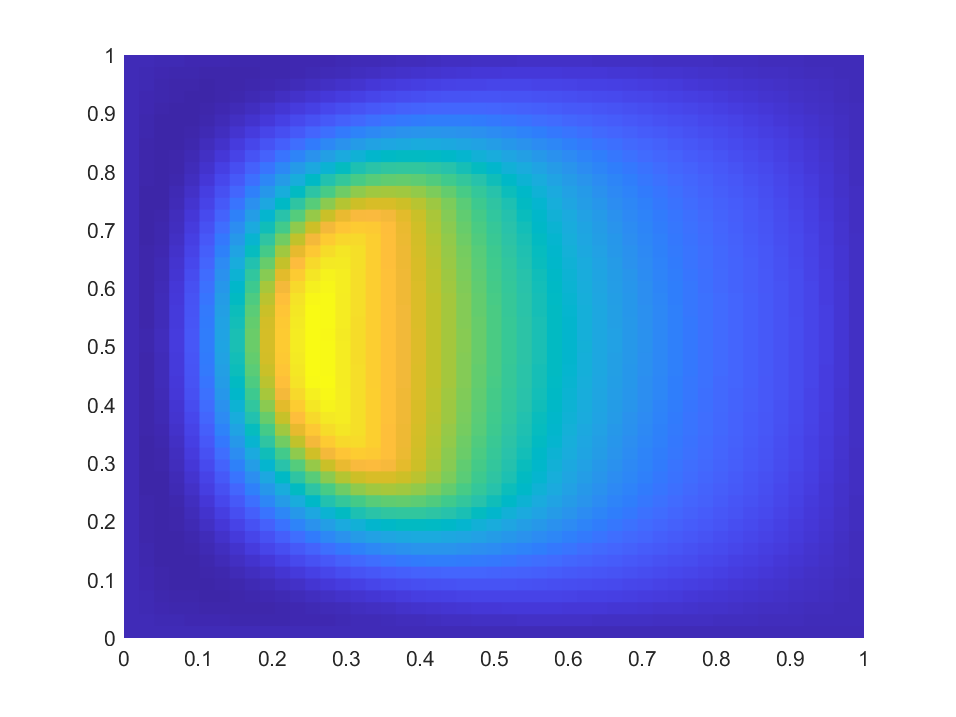}
        \\
        \footnotesize$\delta_{rel} = 1\%$
    \end{minipage}
    \begin{minipage}{0.31\textwidth}
        \centering
        \includegraphics[width=\textwidth]{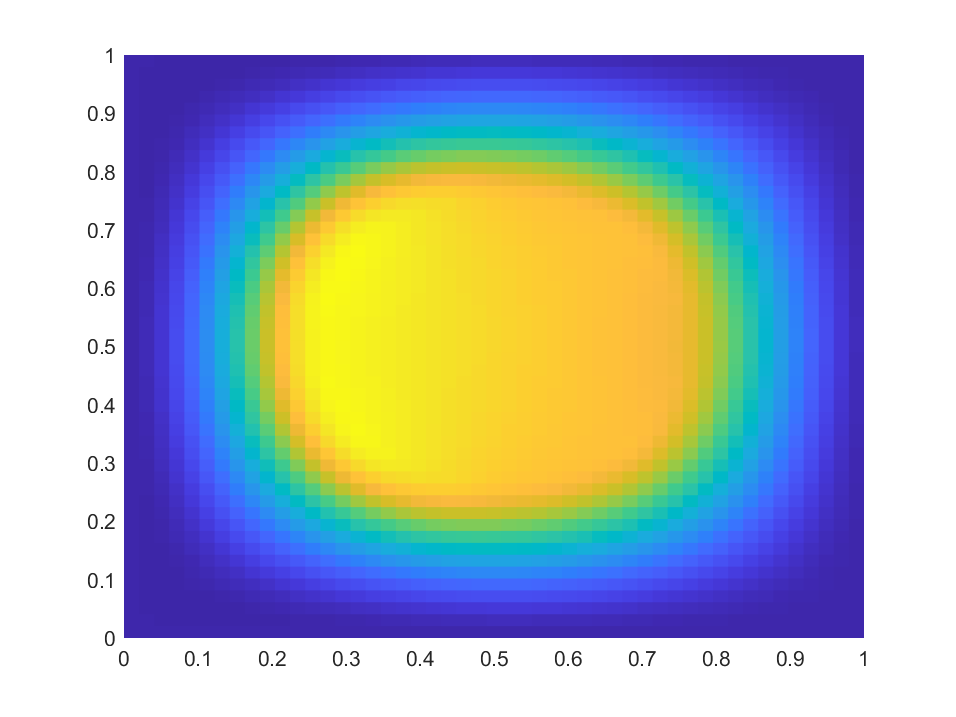}
        \footnotesize $\delta_{rel} = 10\%$
    \end{minipage}
\begin{center}
    \footnotesize 
    Reconstructed $a$ for indenterradius = $0.5$
\end{center}
\vspace{-2ex}
    \begin{minipage}{0.31\textwidth}
        \centering
        \includegraphics[width=\textwidth]{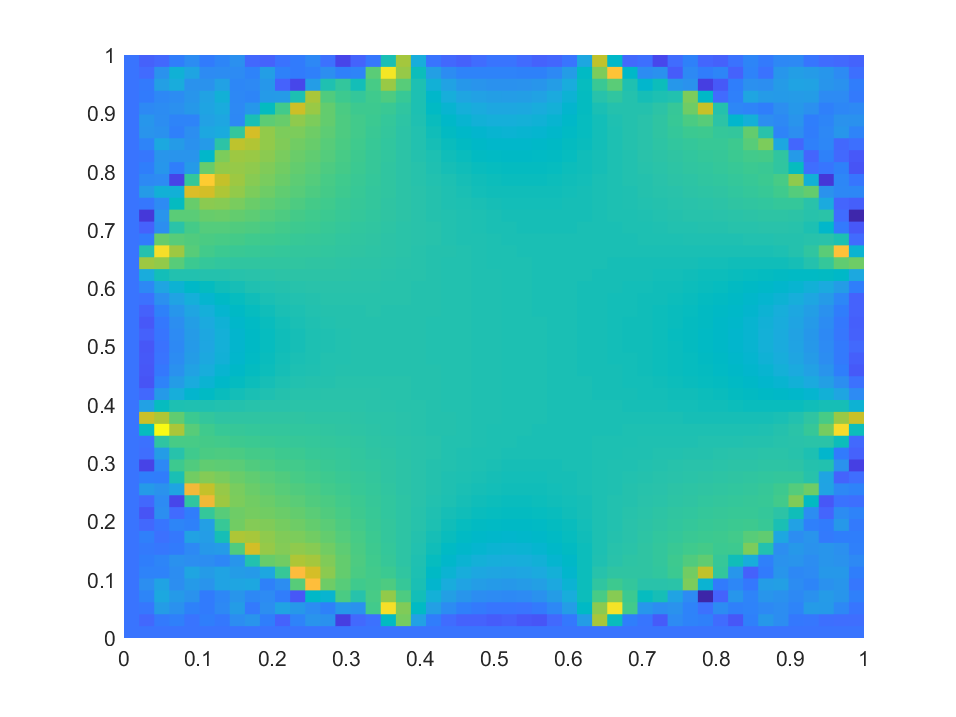}\\
        \footnotesize $\delta_{rel} = 0.1\%$
    \end{minipage}
    \begin{minipage}{0.31\textwidth}
        \centering
        \includegraphics[width=\textwidth]{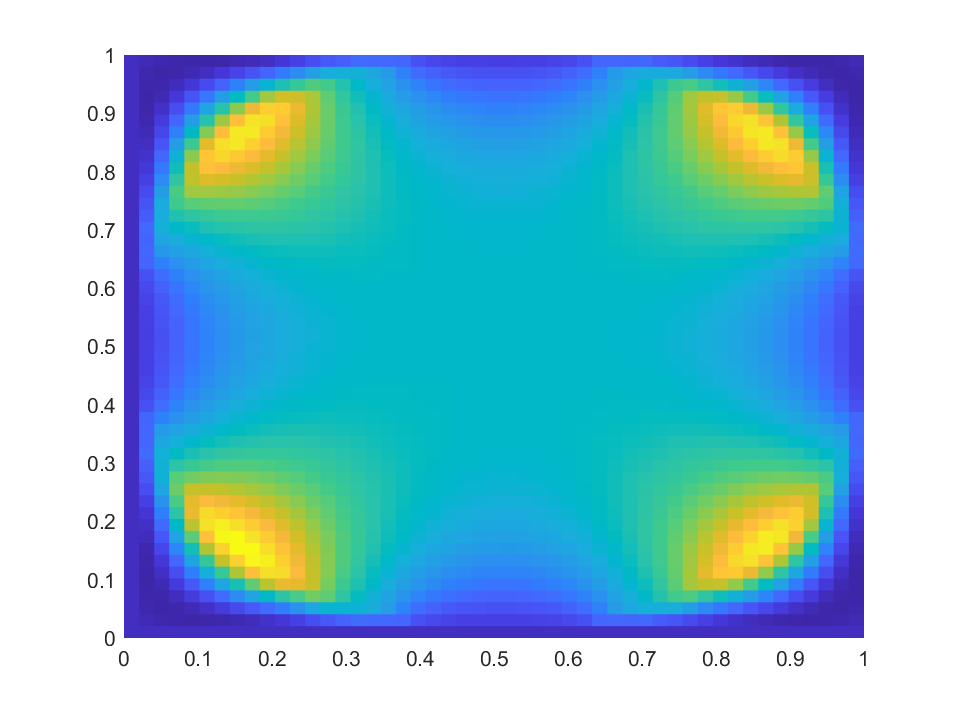}
        \\
        \footnotesize$\delta_{rel} = 1\%$
    \end{minipage}
    \begin{minipage}{0.31\textwidth}
        \centering
        \includegraphics[width=\textwidth]{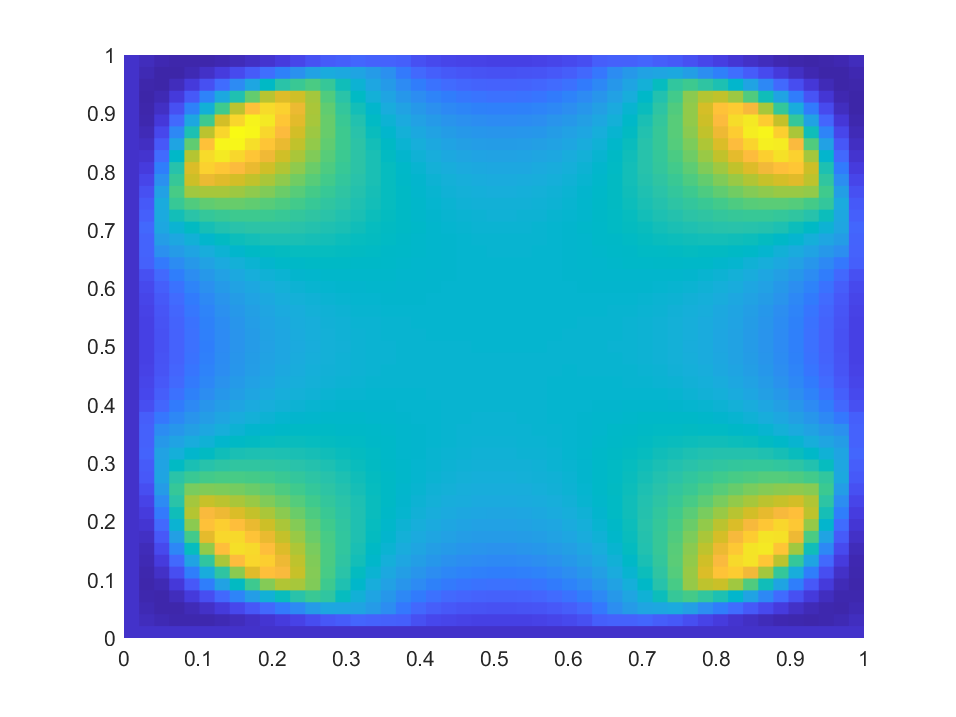}
        \footnotesize $\delta_{rel} = 10\%$
    \end{minipage}
\caption{Experiment 1: Top: ground truth and 3 different indenter shapes. Results: Left to right: increasing noiselevel: Top to bottom: increasing indenter size.}
\label{figex1}
\end{figure}

\begin{table}[ht!]
\caption{Experiment~1 with the setup as in Figure~\ref{figex1} with different indenter radii: results using \eqref{Nest} and, in parenthesis, when using Landweber iteration \eqref{modland}.}
\label{tabex1}
\begin{tabular}{l|ccccc}
    & 
    $\begin{array}{c}
    \text{\sss rel. err:}\\[-1ex]
    \text{\sss optimal $k$}\\
    \frac{\norm{a_{k_{opt}} - a^\dagger}_{H_0^1}}{\norm{a^\dagger-1}_{H_0^1}}
    \end{array}$
    &
    $\begin{array}{c}
    \text{\sss rel. err:}\\[-1ex]
    \text{\sss  disrep. princ}\\
    \frac{\norm{a_{k_{disc}} - a^\dagger}_{H_0^1}}{\norm{ a^\dagger-1 }_{H_0^1}}
    \end{array}$
    &
    $\begin{array}{c}
    \text{\sss discrep. factor}\\[-1ex]
    \text{\sss optimal $k$}\\
    \frac{\norm{F(a_{k_{opt}}) - y^\delta}}{\norm{y-y^\delta}}
    \end{array}$
    & 
    $\begin{array}{c}
    \text{\sss Stop. index}\\[-1ex]
    \text{\sss  optimal}\\
    k_{opt}
    \end{array}$
    &
    $\begin{array}{c}
    \text{\sss Stop. index}\\[-1ex]
    \text{\sss discrepancy}\\
    k_{disc}
    \end{array}$ 
    \\ \hline 
    & \multicolumn{5}{c}{Indenterradius = $0.1$}\\
    $ \delta=0.1\% $ & 0.164 (0.189) & 0.222 (0.252)& 0.33
    (0.56)& 381 (\textgreater 3000) & 76 (674) \\ 
    $ \delta=1\% $ &  0.457 (0.375)& 0.483 (0.500) 
    & 0.80 (0.49)& 21 (347) & 16 (49) \\ 
    $ \delta=10\% $ & 0.792 (0.773) & 0.793 (0.773) & 
    1.19 (0.97) & 1 (2) & 2 (2) \\ 
    \\ \hline 
    & \multicolumn{5}{c}{Indenterradius = $0.25$}\\
    $ \delta=0.1\% $ & 0.305 (0.314) & 0.339 (0.331) & 2.10
    (0.48) & 24 (\textgreater 3000)& 47 (372) \\ 
    $ \delta=1\% $ &  0.615 (0.595) & 0.692 (0.617) &
    1.35 (1.32) & 7 (14) & 10 (23) \\ 
    $ \delta=10\% $ & 0.762 (0.761) & 1.326 (1.025) & 1.10
    (1.10) & 1 (1) & 3  (3) \\ 
    \\  \hline 
    & \multicolumn{5}{c}{Indenterradius = $0.5$}\\
    $ \delta=0.1\% $ & 0.819 (0.878) & 0.982 (0.984)  & 
    0.58 (0.66) & 660 (\textgreater 3000) & 13 (30) \\ 
    $ \delta=1\% $ &  0.796 (0.793)  & 0.989 (0.991)   
    & 0.82 (0.81) & 26 (129) & 2 (2)  \\ 
    $ \delta=10\% $ & 0.801 (0.801) & 0.956 (0.956) & 0.98 
    (0.98)  & 5 (9) & 1 (1) \\ 
    \\ 
\end{tabular}
\end{table}

For each combination of indenterradius and noiselevel, we use the iterative method \eqref{Nest} as described above with at most $k_{max} = 3000$ iterations. In Figure~\ref{figex1}, we present the \emph{optimal} result which minimizes $\norm{a_k - a^\dagger}_{H_0^1(\Omega)}$ over the iteration index $k$. Furthermore, in Table~\ref{tabex1} we present two corresponding error measures, namely
    \begin{equation*}
        \text{relative error} := \frac{\norm{a_k - a^\dagger}_{H_0^1(\Omega)}}{\norm{a^\dagger-1}_{H_0^1(\Omega)}} \,,
        \qquad
        \text{discrepancy factor} := \frac{\norm{F(a_k) -y_\delta}}{\norm{y-y^\delta}} \,,
    \end{equation*}
both for the optimal case and when the stopping index is determined via the discrepancy principle with the discrepancy factor $1.01$. We also performed the same experiments using simple Landweber iteration \eqref{modland}, and the results are given in parenthesis. 

The conclusions which can be drawn from these results is that the iterative regularization works well with the following expected results: if enough information (small indenter radius) is available, we find good reconstructions for  moderate noiselevel ($<1\%$). In case too little information is available (e.g., for the large indenter, cf.~the non-identifyability in Theorem~\ref{th3.1}), then no reasonable reconstruction can be obtained; we again emphasize that this is not a flaw of the method but stems from the problem itself. Compared to simple Landweber iteration, the results are comparable, often better, but with the benefit of a substantial reduction in the required number of iterations. 

From Table~\ref{tabex1}, we also observe an unexpected result regarding the discrepancy principle for the optimal stopping rule: the optimal discrepancy index $\tau_{\text{opt}}$ is in many cases well below $1$, which at first contradicts classical theory which requires $\tau \geq 1$. We think that this is related to a non-dense range of the forward operator (non-attainable case): In the linear case, if the forward operator has a non-zero complement in the data space, one should project the residual and the noise onto the closure of the range to get the effective noiselevel and the effective discrepancy principle (see, e.g. \cite[p.~84]{Engl_Hanke_Neubauer_1996}), otherwise a discrepancy factor below $1$ can appear. (This workaround is hardly doable in the nonlinear case, where the range depends on the iteration or on the exact solution.)

\begin{figure}[ht!]
\begin{minipage}{0.31\textwidth}
    \centering
    \includegraphics[width=\textwidth]{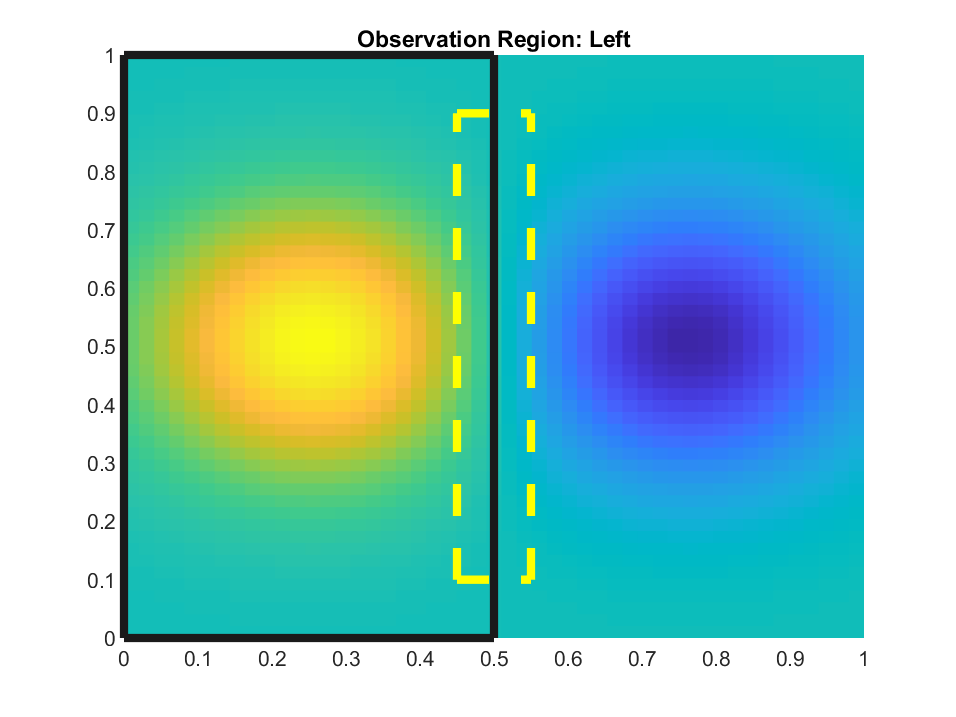}
    \footnotesize 
    Ground truth $a^\dagger$ \\
    Observation region: black\\
    Indenter: yellow dashed\\
    \includegraphics[width=\textwidth]{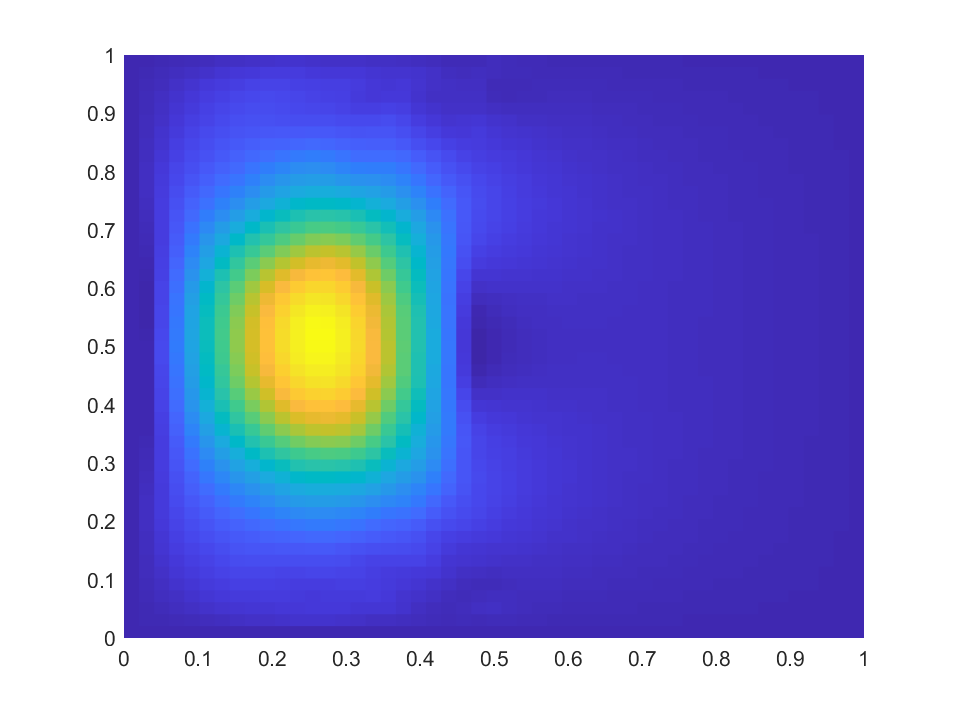}
    \footnotesize Reconstruction
\end{minipage}
\begin{minipage}{0.31\textwidth}
    \centering
    \includegraphics[width=\textwidth]{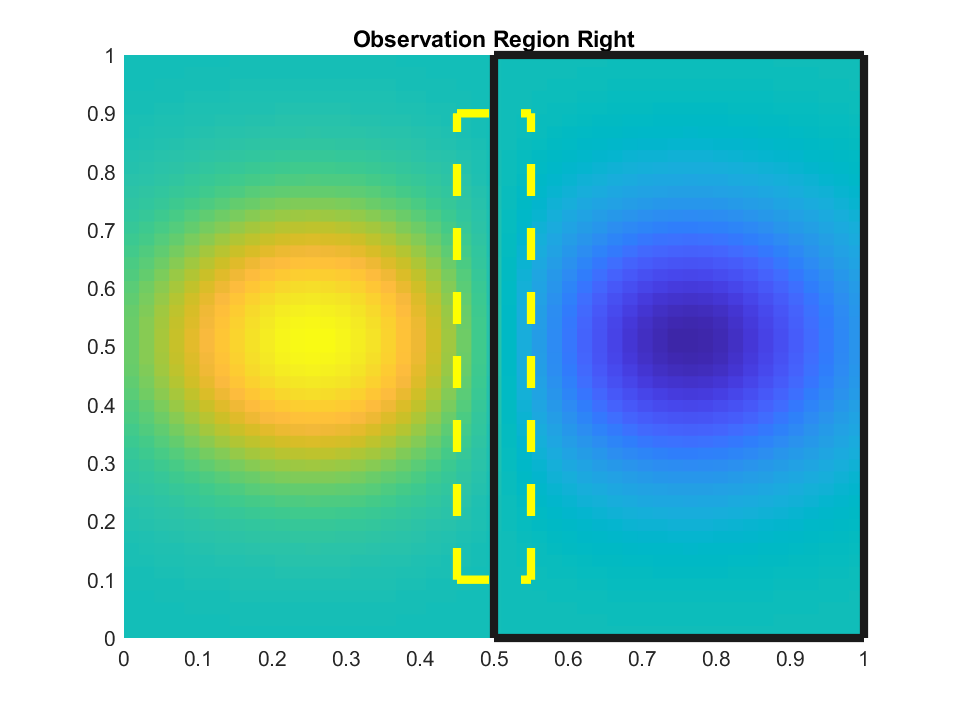}
    \footnotesize 
    Ground truth $a^\dagger$ \\
    Observation region: black\\
    Indenter: yellow dashed\\
    \includegraphics[width=\textwidth]{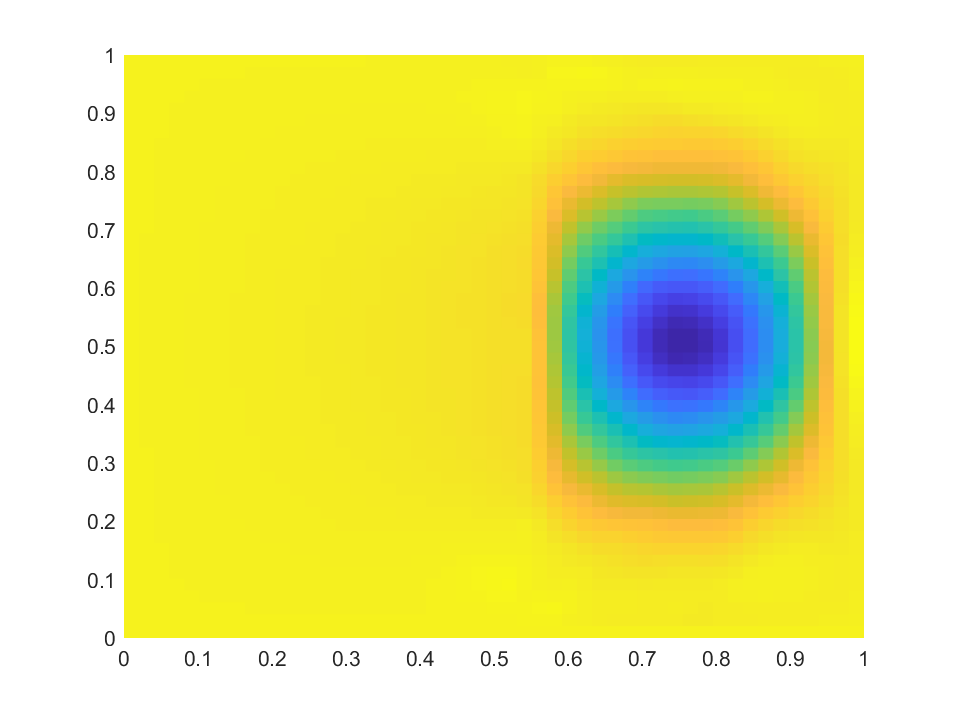}
    \footnotesize Reconstruction
\end{minipage}
\begin{minipage}{0.31\textwidth}
    \centering
    \includegraphics[width=\textwidth]{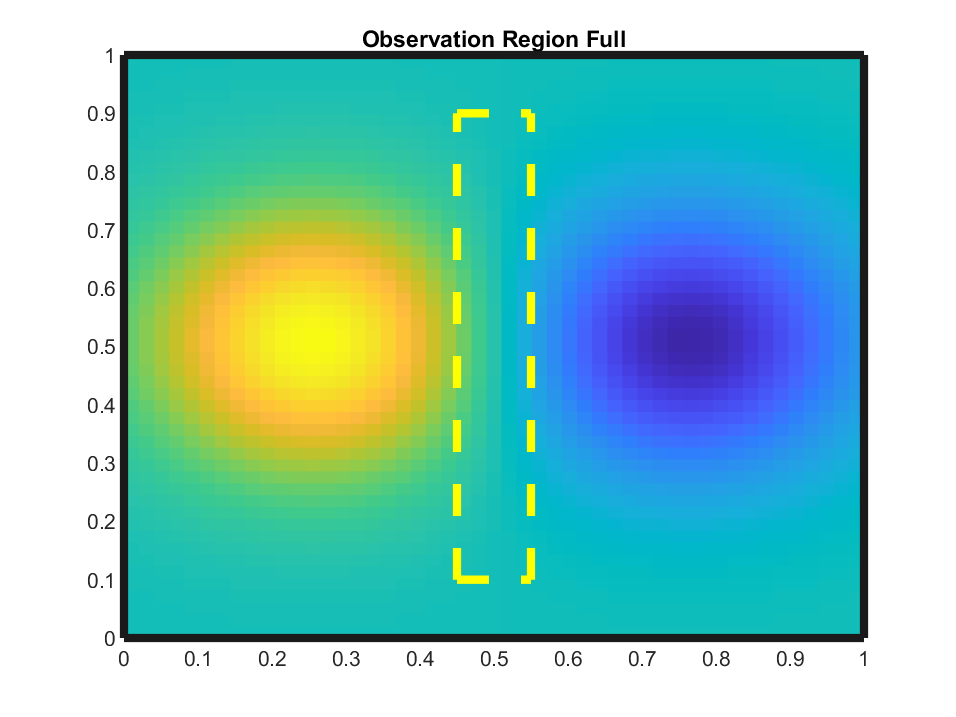}
    \footnotesize 
    Ground truth $a^\dagger$ \\
    Observation region: black\\
    Indenter: yellow dashed\\[1ex]
    \includegraphics[width=\textwidth]{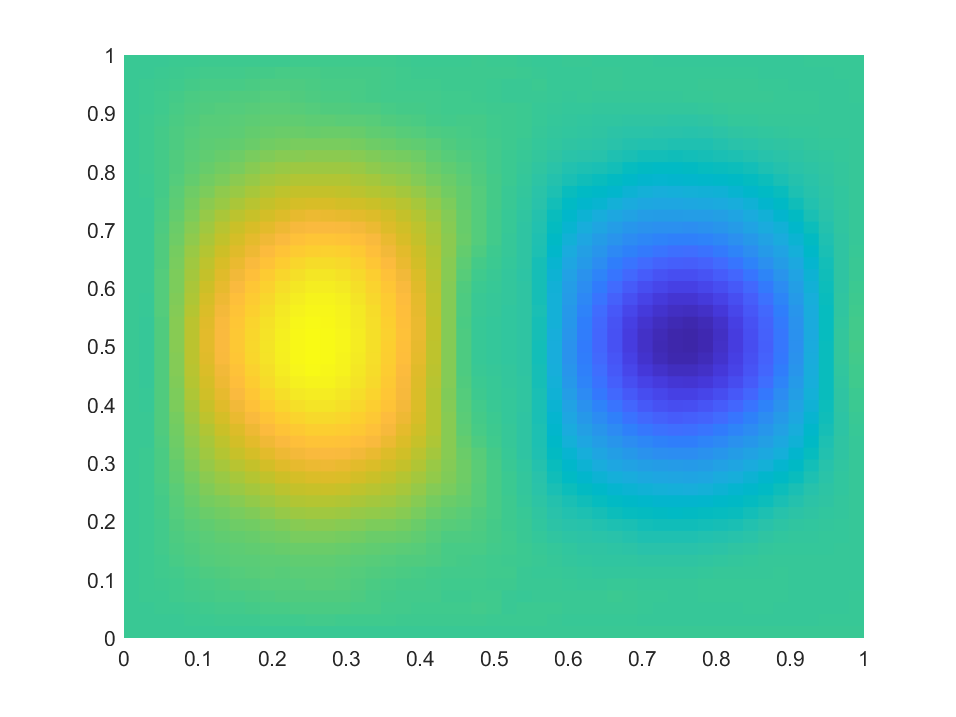}
    \footnotesize Reconstruction
\end{minipage}
\caption{Experiment~2: reconstruction from partial observations. Top row: ground truth $a^\dagger$ with indenter region displayed in dashed yellow. The respective observation regions are displayed as rectangles in black. Bottom: reconstruction ($\delta_{rel}= 0.1\%$.) (Left: observation on left half; Center: observation on right half; Right: Full observation)}
\label{figex2}
\end{figure} 

\vspace{10pt}
\noindent
\textbf{Experiment 2.}
In our second experiment, we illustrate how the constraint can lead to a loss of information and identifiablity. For this, we use the same discretization as before, and for the ground-truth $a^\dagger$ we use a sum of two Gaussians of opposite signs,~i.e., 
    \begin{equation*}
        a^\dagger(x_1,x_2) := 1 + \frac{1}{2} e^{-20(x_1-0.25)^2 +(x_2-0.5)^2} + \frac{1}{2} e^{-20(x_1-0.25)^2 +(x_2-0.5)^2} \,.
    \end{equation*} 
To test more general setups, we take a non-zero right-hand side $f = 6 (x_1(1-x_1) + x_2(1-x_2))$. In this experiment, we use data with random noise $\delta_{\text{rel}} = 0.1\%$, and terminate the after $300$ iterations. For the indenter, we select the characteristic function of the thin tall rectangle $[0.45,0.55] \times [0.1 , 0.9]$. Moreover, we consider three different observation setups:
    i) observation of $u$ on the left part $0 \leq x \leq 0.5$;
    ii) observation of $u$ on the right part $0.5 \leq x \leq 1$;
    iii) full observations. 
See Figure~\ref{figex2} (top row) for an illustration, where the ground-truth $a^\dagger$ is displayed together with the indenter shape (yellow dashed lines) and the respective observation region (black lines). 
The bottom row in Figure~\ref{figex2} displays the optimal results after at most 300 iterations. We note that with partial observations, the indenter acts as  a kind of barrier for the recovery of the coefficients. Only the part within the observation region can be reconstructed reliably. The result again illustrates the non-uniqueness of the problem, similar to the setup of Figure~\ref{fig:one}.

\begin{figure}[ht!]
\begin{minipage}{0.49\textwidth}
    \centering 
    \includegraphics[width=\textwidth]{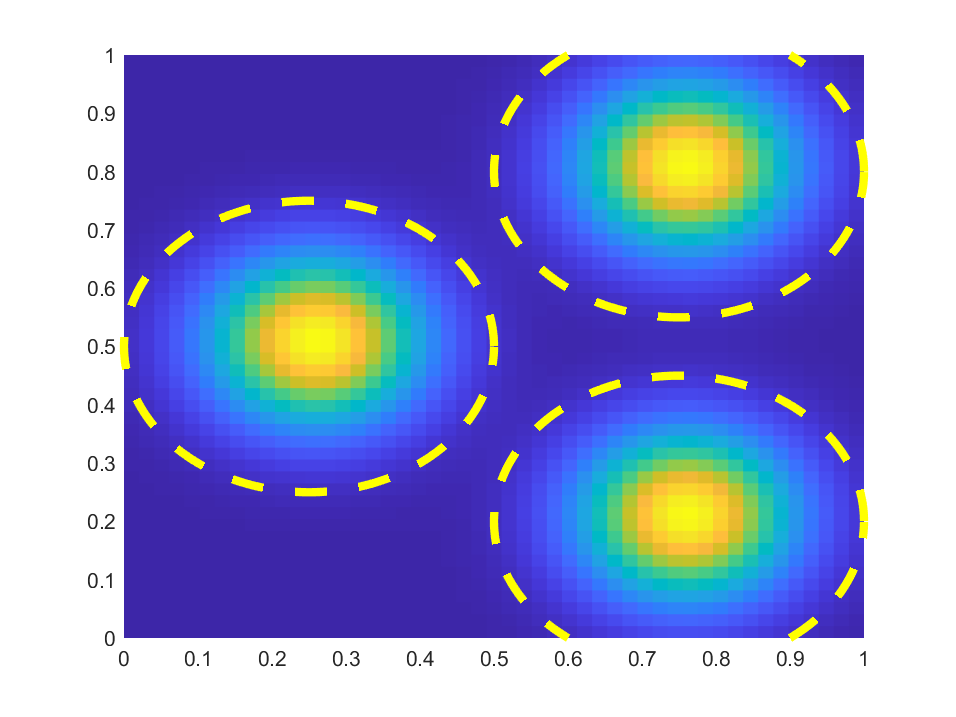}
    \footnotesize 
    Ground truth $a^\dagger$ \\
    Full Observations \\
    Indenters: yellow dashed\\[1ex]
\end{minipage}
\begin{minipage}{0.49\textwidth}
    \centering 
    \includegraphics[width=\textwidth]{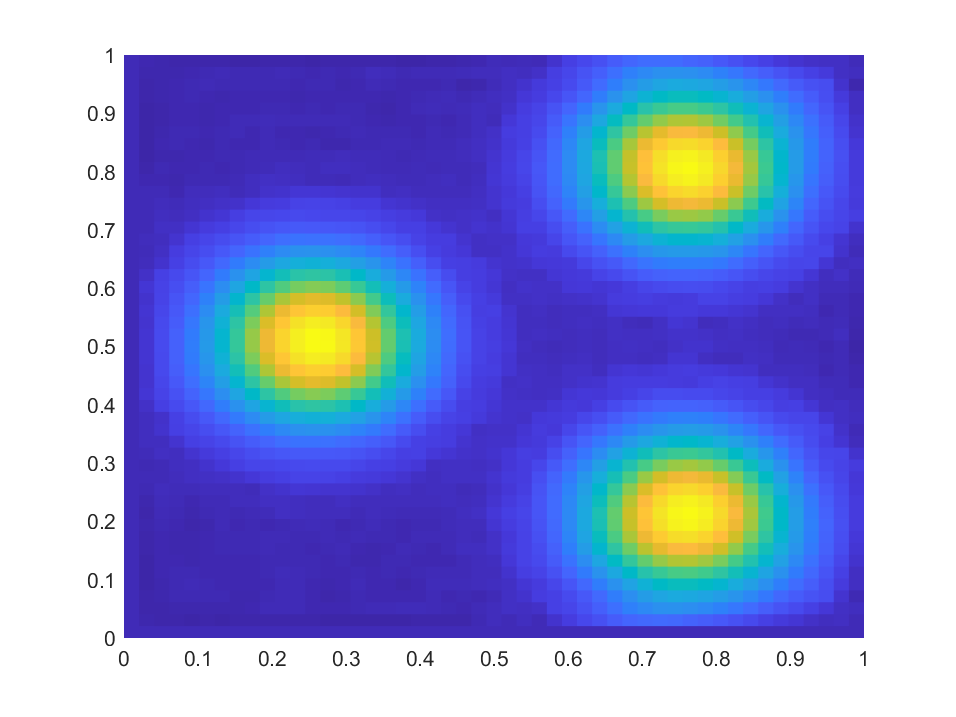}
    \footnotesize 
    Reconstruction using 3 \\
    independent measurements \\[1ex]
    \ 
\end{minipage}
\caption{Experiment 3: multiple measurements. Left: ground truth and shape of 3 (different) indenters. Right: resulting $a_k$  after $k = 1106$ iterations of method \eqref{Nest}.}
\label{figex3}
\end{figure}

\vspace{10pt}
\noindent
\textbf{Experiment 3.} In our third experiment, we illustrate the benefit of multiple observations in the inverse contact problem. With the same discretization as before, for the ground-truth $a^\dagger$ we use the sum of three Gaussians centered at three different points:
    \begin{equation*}
    \begin{split}
        a^\dagger(x_1,x_2) = 1 &+ 
        0.5 \, e^{-50 ( 
        (x_1-0.25)^2 + (x_2- 0.5)^2)}
        \\
        &+
        0.5 \, e^{-50 ( 
        (x_1-0.75)^2 + (x_2- 0.2)^2)} 
        +
        0.5 \, e^{-50 ( 
        (x_1-0.75)^2 + (x_2- 0.8)^2)} 
        \,,
    \end{split}
    \end{equation*}
Also, we choose the same non-zero right-hand side $f$ as Experiment~2. Furthermore, we consider three different, circularly shaped indenters, centered at the same locations as the three Gaussians in the ground-truth $a^\dagger$, and each with the same radius of $0.25$. In this way, each indenter essentially covers one of the Gaussians; see Figure~\ref{figex3} (left) for an illustration of the exact solution and the shape of the indenters (yellow dashed). 

It is clear that if the indenters are all applied simultaneously, none of the Gaussians of the ground truth can be recovered. However, this is not the case when they are used sequentially: We perform three experiments, where each time only one of the indenters is applied, providing three different solutions $u_1,u_2,u_3$. Furthermore, we consider full observations, and thus the data consists of the set $\Kl{u_1, u_2, u_3}$, contaminated by noise. The noise is added in the same way as in Experiment~1 by using both random and discretization errors with a relative noiselevel of $0.1\%$. Figure~\ref{figex3} (right) illustrates the best iterate obtained within the first 1200 iterations of the Nesterov reconstruction scheme \eqref{Nest}. We can observe a reasonably accurate reconstruction, which is close to the ground truth. Further experiments not shown here confirm that if only one measurement of $u_i$ is used for reconstruction, then the part of $a^\dagger$ covered by the indenter cannot be reconstructed, while the other two Gaussians can be partially recovered.  

\begin{figure}[ht!]
\begin{minipage}{0.49\textwidth}
    \centering 
    \includegraphics[width=\textwidth]{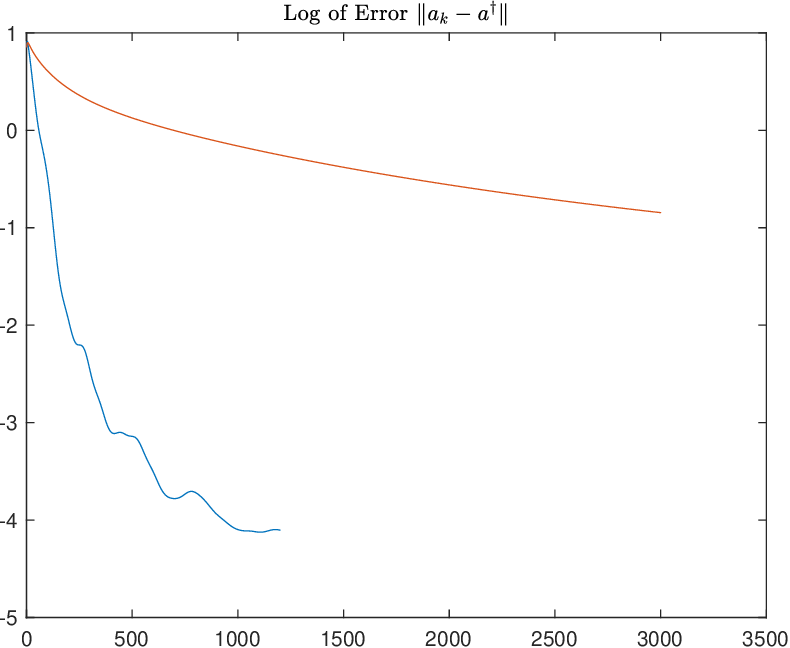}
    \footnotesize 
\end{minipage}
\begin{minipage}{0.49\textwidth}
    \centering 
    \includegraphics[width=\textwidth]{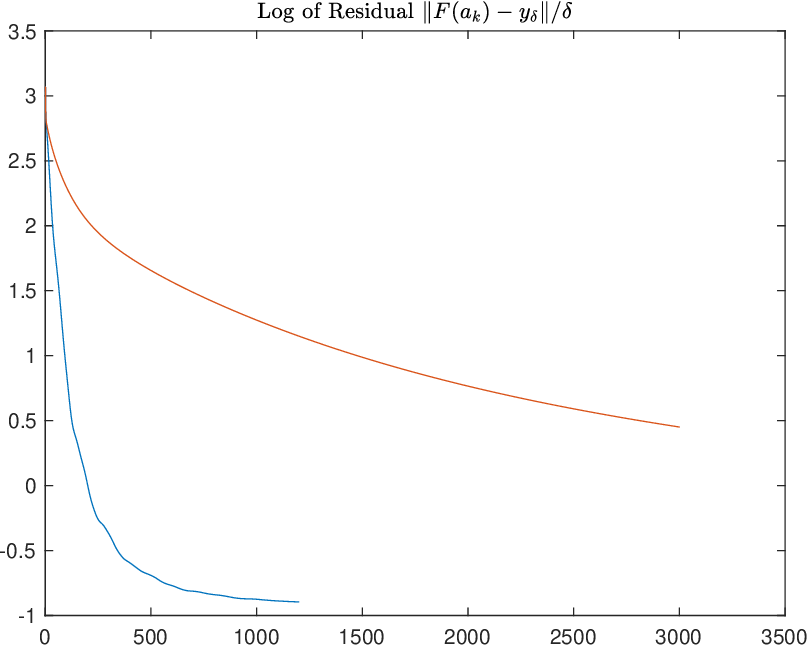}
    \footnotesize 
\end{minipage}
\caption{Experiment~3: convergence curves for the Nesterov-accelerated reconstruction method \eqref{Nest} (blue) and Landweber iteration \eqref{modland} (red) Left: logarithm of error $\norm{a_{k} - a^\dagger}_{H_0^1(\Omega)}$ vs.\ iterations. Right: logarithm of $\norm{F(a_k) -y^\delta}/\norm{y-y^\delta}$ vs iterations.}
\label{figex3plot}
\end{figure}

Finally, in Figure~\ref{figex3plot} (left) we display the convergence curves of the log of the error $\norm{a_{k} - a^\dagger}_{H_0^1}$ versus the iteration number $k$ for both the Landweber iteration \eqref{modland} (red) and the Nesterov-accelerated version \eqref{Nest} (blue). Furthermore, in Figure~\ref{figex3plot} (right) we display the log of the discrepancy factor $\norm{F(a_k) -y^\delta}/\norm{y-y^\delta}$ for both methods versus the iteration index $k$. The acceleration of the Nesterov scheme is clearly visible.

\section{Conclusion}\label{sect_conclusion}

In this paper, we considered the identification of a scalar coefficient in a PDE-based parameter estimation problem with contact constraints, serving as an idealized model of a membrane under forces and constrained by a barrier or indenter. We discussed both the forward and inverse parameter estimation problems, as well as uniqueness and non-uniqueness issues caused by the contact constraints. Furthermore, we considered the finite-dimensional versions of these problems, and discussed the design and implementation of corresponding solution and regularization approaches. Finally, we presented a number of numerical experiments demonstrating the performance of our proposed approaches, and illustrating both uniqueness and non-uniqueness situations.

\section{Acknowledgements \& Support}

This research was funded in part by the Austrian Science Fund (FWF) SFB 10.55776/F68 ``Tomography Across the Scales'', project F6805-N36 (Tomography in Astronomy) and project F6807-N36 (Tomography with Uncertainties), as well as 10.55776/P34981 ``New Inverse Problems of Super-Resolved Microscopy (NIPSUM)''. For open access purposes, the authors have applied a CC BY public copyright license to any author-accepted manuscript version arising from this submission. The financial support by the Austrian Federal Ministry for Digital and Economic Affairs, the National Foundation for Research, Technology and Development and the Christian Doppler Research Association is gratefully acknowledged. The authors thank Prof.~Christian Clason, University Graz, for fruitful discussions.

\bibliographystyle{plain}
{\footnotesize
\bibliography{my.bib}
}

\end{document}